\renewcommand{\@fnsymbol}[1]{\@arabic{#1 }}
\begin{document}
\section{Introduction and main result}\label{intro}
The motivation of the present study is twofold. From a mathematical point of view it leads to a new large population limit of a system of particles performing a coordinated and spatially structured critical branching in a rapidly fluctuating random environment given by a bi-parental Moran graph; see  also Remark \ref{rem:heuristics} for a more detailed description. The biological motivation is to model the  evolution of transposable elements. These are repetitive sequences of 100 base pairs or longer, which are able to relocate within the genome of a host; see e.g.\ \cite{pmid30454069} for a review on transposable elements. 

We thus consider a population of size $N$ (the number of diploids) undergoing random reproduction events at rate $N^2/2$. Each individual has a type in $\mathbb N_0$, where type $k$ means that the individual carries $k$ genetic elements (GEs) in its genome. At a reproduction event a randomly chosen individual dies, and a randomly chosen pair of individuals produces some offspring. If the types of the parents are $k$ and $l$, then the type of the offspring has  a binomial distribution with parameters $k+l$ and $\tfrac 12$, i.e.\ it inherits each parental GE independently with probability $\tfrac 12$. Thus, if $x_k$ denotes the current fraction of individuals of type $k \in \mathbb N_0$, and $x:= (x_k)_{k\in \mathbb N_0}$, then  the jump rate from  \mbox{$x$ to  $x + (e_m - e_n)/N $ is}
\begin{align}\label{eq:jumpRate}
  \tfrac {N^2}2 x_n \sum_{k,l} x_k x_l \binom{k+l}{m} 2^{-(k+l)},
\end{align}
where $e_m$ is the $m$th unit vector, $m=0,1,2,...$. 
This gives rise to a Markovian jump process $X^N := (X^N_t)_{t\ge 0}$ taking its values in $\mathcal P(\mathbb N_0)$, the set of probability measures on~$\mathbb N_0$ endowed with the topology of weak convergence. We write $$Z^N_t := \sum_{k=1}^\infty k X^N_t(k)$$ for the average number of GEs per individual at time $t$. Our main result concerns the convergence in distribution  of the sequence of stochastic processes $\left(Z^N, X^N\right)$ as $N\to \infty$. The limiting process will turn out to be $(Z,X)$, where $Z$ is a {\em standard Feller branching diffusion} obeying the SDE
\begin{equation} \label{FBD}
dZ_t = \sqrt{Z_t}\, dW_t, \qquad Z_0 =z
\end{equation}
and 
\begin{equation} \label{Poi}
X_t = {\rm Poi}(Z_t),   \quad t > 0,
\end{equation}
where  ${\rm Poi}(\lambda)$ denotes the Poisson distribution on $\mathbb N_0$ with parameter $\lambda > 0$.

To specify a topology underlying this convergence we define the  {\em weighted occupation measure} of $\xi \in \mathcal D(\mathcal P(\mathbb N_0))$, the set of c\`adl\`ag $\mathcal P(\mathbb N_0)$-valued paths indexed by $t \in [0,\infty)$, 
as the  probability measure
\begin{align}\label{occmeas}
\Gamma_\xi([0,t] \times A) := \int_0^t e^{-s} \mathbf 1_{\{\xi_s\in A\}} ds,
\end{align}
where $t \ge 0$ and $A$ is a measurable subset of $\mathcal P(\mathbb N_0)$. Following \cite{Kurtz1991} we say that a sequence $(\xi^N)$ in $\mathcal D(\mathcal P(\mathbb N_0))$ {\em converges  in measure} to a $\xi \in \mathcal D(\mathcal P(\mathbb N_0))$ if the sequence of probability measures  $\Gamma_{\xi^N}$ converges weakly to $\Gamma_{\xi}$. On $\mathcal D(\mathbb R_+)$ we will use the Skorokhod topology, see e.g.\ Chapter~3 in \cite{EthierKurtz1986}.
\begin{theorem}\label{T1}
  Let $X^N$ be the $\mathcal P(\mathbb N_0)$-valued Markov jump process with jump rates as in \eqref{eq:jumpRate}, starting in $X^N_0$ with atoms of size $N^{-1}$. Assume that, for some $z > 0$, 
  $Z^N_0 \xrightarrow{N\to\infty} z$ in probability, and $\sup_N  \mathbb E\Big[\sum_{k=1}^\infty k^3 X^N_0(k)\Big] < \infty$. Then $ (Z^N, X^N)$ converges in distribution to $(Z,X)$ obeying \eqref{FBD} and \eqref{Poi}, where  $\mathcal D(\mathbb R_+) \times \mathcal D(\mathcal P(\mathbb N_0))$ is equipped with the Skorokhod topology in the first, and with the topology of convergence in measure in the second coordinate.
\end{theorem}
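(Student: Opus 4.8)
The plan is a slow--fast (averaging) analysis in which the empirical first moment $Z^N=\psi_1(X^N)$, where $\psi_j(x):=\sum_k k^j x_k$, is the \emph{only} slow variable, while every other functional of $X^N$ is fast and relaxes to its Poisson value. The computations that drive this come from applying the generator $\mathcal L^N$ of \eqref{eq:jumpRate} to the probability generating function $\phi_s(x):=\sum_k s^k x_k$, $s\in[0,1]$: a short calculation gives $\mathcal L^N\phi_s(x)=\tfrac N2\big(\phi_{(1+s)/2}(x)^2-\phi_s(x)\big)$, $\mathcal L^N\psi_1(x)=0$, $\mathcal L^N\psi_2(x)=\tfrac N4\big(\psi_1(x)+\psi_1(x)^2-\psi_2(x)\big)$, and the quadratic variation rate of $Z^N$ equals $A^N(x):=\tfrac14\psi_1(x)+\tfrac34\big(\psi_2(x)-\psi_1(x)^2\big)$. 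The associated deterministic fast flow $\dot g(s)=g((1+s)/2)^2-g(s)$ on generating functions has the p.g.f.\ $s\mapsto e^{z(s-1)}$ of ${\rm Poi}(z)$ as its unique fixed point of mean $z$ (iterating $g(s)=g((1+s)/2)^2$ and using $g'(1)=z$ forces $g(s)=e^{z(s-1)}$); moreover $A^N(x)=z$ exactly when $x={\rm Poi}(z)$, which is the origin of \eqref{FBD}.

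First I would prove the a priori estimates. From the identities above together with their analogues for $\psi_3,\psi_1^2,\psi_1\psi_2$, a Gronwall argument --- in which the $N$-dependence produces only exponentially fast decaying transients and $O(1/N)$ corrections, because of the fast mean reversion --- gives $\sup_N\sup_{t\le T}\mathbb E[\psi_j(X^N_t)]<\infty$ for $j=1,2,3$, starting from the hypothesis on $X^N_0$. Consequently $Z^N$ is a true martingale with $\mathbb E[Z^N_t]=\mathbb E[Z^N_0]$; $(Z^N_t)^2-\int_0^t A^N(X^N_s)\,ds$ is a martingale with $\mathbb E[A^N(X^N_s)]\le C$ (since $0\le\psi_2-\psi_1^2\le\psi_2$); $\mathbb E[\sup_{t\le T}(Z^N_t)^2]<\infty$ by Doob; and a moment count shows that the largest jump of $Z^N$ on $[0,T]$ tends to $0$ in probability (the conditional rate of jumps exceeding $\delta$ being $O(\psi_3(X^N_s)/(\delta^3N))$), so any weak limit of $Z^N$ is continuous.

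Second, tightness. For $Z^N$ I would invoke the Aldous--Rebolledo criterion: the drift vanishes, $\mathbb E[(Z^N_\sigma-Z^N_\tau)^2]=\mathbb E[\int_\tau^\sigma A^N(X^N_s)\,ds]\le C\delta$ for stopping times $\tau\le\sigma\le\tau+\delta\le T$, and $\mathbb P(\sup_{t\le T}(Z^N_t)^2>R)\le C/R^2$; so $Z^N$ is tight in $\mathcal D(\mathbb R_+)$. For the weighted occupation measures $\Gamma^N:=\Gamma_{X^N}$, the set $\{x\in\mathcal P(\mathbb N_0):\psi_1(x)\le R\}$ is compact (it is closed, as $\psi_1$ is lower semicontinuous, and uniformly tight, as $x(\{k\ge K\})\le R/K$ on it), and $\mathbb E\big[\Gamma^N([0,\infty)\times\{\psi_1>R\})\big]=\mathbb E\big[\int_0^\infty e^{-s}\mathbf 1\{Z^N_s>R\}\,ds\big]\le\mathbb E[Z^N_0]/R$, which provides compact containment and hence tightness of $(\Gamma^N)$ in $\mathcal P([0,\infty)\times\mathcal P(\mathbb N_0))$. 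Thus $(Z^N,\Gamma^N)$ is tight; along a subsequence $(Z^N,\Gamma^N)\Rightarrow(Z,\Gamma)$, with $Z$ continuous and, since each $\Gamma^N$ has time-marginal $e^{-s}\,ds$, $\Gamma(ds\,dx)=e^{-s}\,ds\,\pi_s(dx)$ for a measurable family of random $\pi_s\in\mathcal P(\mathcal P(\mathbb N_0))$.

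It remains to identify $(Z,\Gamma)$, where the real work lies. The limit $Z$ is a continuous martingale (from the martingale property and uniform integrability of $(Z^N_t)^2$). Pushing $\Gamma^N$ forward under $x\mapsto\psi_1(x)$ yields the weighted occupation measure of $Z^N$; since $Z^N\to Z$ in the Skorokhod topology with $Z$ continuous, hence uniformly on compacts, and $\psi_1$ is uniformly integrable along $\Gamma^N$ by the second-moment bound, the $\psi_1$-marginal of $\pi_s$ equals $\delta_{Z_s}$ for a.e.\ $s$ --- i.e.\ under $\pi_s$ the mean of $x$ is almost surely $Z_s$. Separately, the fast dynamics force $\pi_s$ to be supported on Poisson distributions: here I would construct a Lyapunov functional $\Phi(x)=\int_0^1 w(s)\big(\phi_s(x)-e^{\psi_1(x)(s-1)}\big)^2\,ds$, with a suitable weight $w$, and show via the generator identities and the contraction of $g\mapsto g((1+\cdot)/2)^2$ near the Poisson fixed point (it roughly halves the deviation from ${\rm Poi}(z)$ in each step) that $\mathcal L^N\Phi(x)\le-cN\,\Phi(x)+C\big(1+\psi_3(x)\big)$; this gives $\int_0^T\mathbb E[\Phi(X^N_s)]\,ds\to0$, hence $\int\Phi\,d\pi_s=0$, i.e.\ $x={\rm Poi}(\psi_1(x))$ for $\pi_s$-a.e.\ $x$ and a.e.\ $s$. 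Combined with the previous point this yields $\pi_s=\delta_{{\rm Poi}(Z_s)}$. Passing to the limit in $(Z^N_t)^2-\int_0^t A^N(X^N_s)\,ds$ --- using $\int_0^t\psi_2(X^N_s)\,ds=\int\psi_2(x)e^s\,\Gamma^N(ds\,dx)\to\int\psi_2(x)e^s\,\Gamma(ds\,dx)$, where uniform integrability of $\psi_2$ along $\Gamma^N$ is precisely what the third-moment hypothesis supplies --- then gives $\langle Z\rangle_t=\int_0^t\big(\tfrac14 Z_s+\tfrac34(Z_s+Z_s^2-Z_s^2)\big)\,ds=\int_0^t Z_s\,ds$, so $Z$ solves the martingale problem for \eqref{FBD}. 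Pathwise uniqueness (Yamada--Watanabe) for $dZ=\sqrt{Z}\,dW$ identifies the limit uniquely, so $Z^N\Rightarrow Z$ and every subsequential limit of $(Z^N,\Gamma^N)$ equals $(Z,\Gamma)$ with $\Gamma(ds\,dx)=e^{-s}\,ds\,\delta_{{\rm Poi}(Z_s)}(dx)$ --- which is exactly the asserted joint convergence of $(Z^N,X^N)$, the second coordinate being read off from its weighted occupation measure and $X_\cdot={\rm Poi}(Z_\cdot)$. The main obstacle is the Lyapunov/contraction estimate for the fast process --- making rigorous that the fast motion collapses onto the Poisson point mass --- since the polynomial relations obtained from generating-function test functions do not, by themselves, pin $\pi_s$ down.
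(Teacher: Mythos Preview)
Your overall architecture---generator identities, moment bounds, tightness of $Z^N$ via Aldous--Rebolledo, tightness of the occupation measures $\Gamma_{X^N}$, and identification of the limit---is the same as the paper's, and your computations (after the change of variable $s\leftrightarrow 1-s$ between your $\phi_s$ and the paper's $\psi_s$) coincide with Lemmas~\ref{l1}--\ref{l2} and Corollary~\ref{cor1}. The genuine gap, which you explicitly flag at the end, is the step that carries the theorem: showing that any subsequential limit $\Gamma$ of $\Gamma_{X^N}$ is concentrated on $\mathcal{POI}$. Your proposed Lyapunov functional $\Phi(x)=\int_0^1 w(s)\big(\phi_s(x)-e^{\psi_1(x)(s-1)}\big)^2\,ds$ is never constructed: neither the weight $w$ nor the inequality $\mathcal L^N\Phi\le -cN\,\Phi+C(1+\psi_3)$ is established, and the heuristic ``roughly halves the deviation'' does not translate into a clean quadratic Lyapunov bound without substantial additional work. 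Moreover, your observation that the deterministic fixed-point equation $g(s)=g((1+s)/2)^2$ forces $g(s)=e^{z(s-1)}$ is correct for a \emph{deterministic} $g$, but what you must rule out in the limit is a nontrivial random mixture $\pi_s$ over $\mathcal P(\mathbb N_0)$; the single relation $\mathbb E[\phi_s(Y)]=\mathbb E[\phi_{(1+s)/2}(Y)^2]$ coming from $\ell=1$ does not by itself force $Y$ to be a.s.\ Poisson.

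The paper circumvents the Lyapunov construction entirely. It applies the generator to \emph{products} $g(\rho_1(x))\prod_{i=1}^\ell\psi_{s_i}(x)$ for arbitrary $\ell$ and bounded smooth $g$, divides by $N$, and passes to the limit in the resulting martingale. Because the leading $N$-term in $G^N$ acting on such products is exactly $g(\rho_1)\sum_{j}\big(\psi_{s_j/2}^2\prod_{k\neq j}\psi_{s_k}-\prod_k\psi_{s_k}\big)$, one obtains for $Y\sim\bar\Gamma$ the family of identities
\[
\mathbb E\Big[\prod_{k=1}^\ell\psi_{s_k}(Y)\,\Big|\,\rho_1(Y)\Big]
=\frac1\ell\sum_{j=1}^\ell\mathbb E\Big[\psi_{s_j/2}^2(Y)\prod_{k\neq j}\psi_{s_k}(Y)\,\Big|\,\rho_1(Y)\Big],
\qquad \ell\ge 1,\ s_1,\dots,s_\ell\in[0,1].
\]
A separate characterisation lemma (Lemma~\ref{l:4}) then shows that this family of \emph{conditional} moment relations, for all $\ell$, forces $Y\stackrel{d}{=}\mathrm{Poi}(\rho_1(Y))$; the proof is a random-partition iteration that implements your fixed-point argument at the level of all mixed moments conditionally on $\rho_1(Y)$. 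This is precisely the missing ingredient in your proposal: it replaces the unproven Lyapunov estimate by a soft martingale argument plus an algebraic characterisation of Poisson, and simultaneously handles the conditioning on $\rho_1$ that links $\pi_s$ to $\delta_{Z_s}$.
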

In particular, Theorem~\ref{T1} shows that the average number of GEs per individual becomes Markovian in the limit $N\to \infty$. This average follows the dynamics of a Feller branching diffusion, and the distribution of the total number of GEs is Poisson at  all points in time. The proof of Theorem~\ref{T1} can be found in Section~\ref{Secproof}.

\section{Perspectives and background}\label{secPB}
\begin{remark}[An individual-based graphical construction]\label{rem:heuristics}
  The following  individ\-ual-based construction of the process $X^N$ gives a heuristic explanation of why Poisson limits and Feller's branching diffusion  appear in the situation of Theorem \ref{T1}.  Let $\Pi_{hij}$,  $h,i,j \in \{1,...,N\}$, be a family of independent rate $N^{-1}$ Poisson point processes on the time axis. At each time point $t$ of $\Pi_{hij}$ draw a pair of arrows, one from $(i,t)$ to $(h,t)$ and one from $(j,t)$ to $(h,t)$. This gives rise to the  {\em bi-parental Moran graph} $\mathcal G_N$ with vertex set $\{1,...,N\} \times \mathbb R_+$; see Figure \ref{Moran2}. 
  
\begin{figure}[ht]
  \begin{center}
    \includegraphics[width=0.8\textwidth]{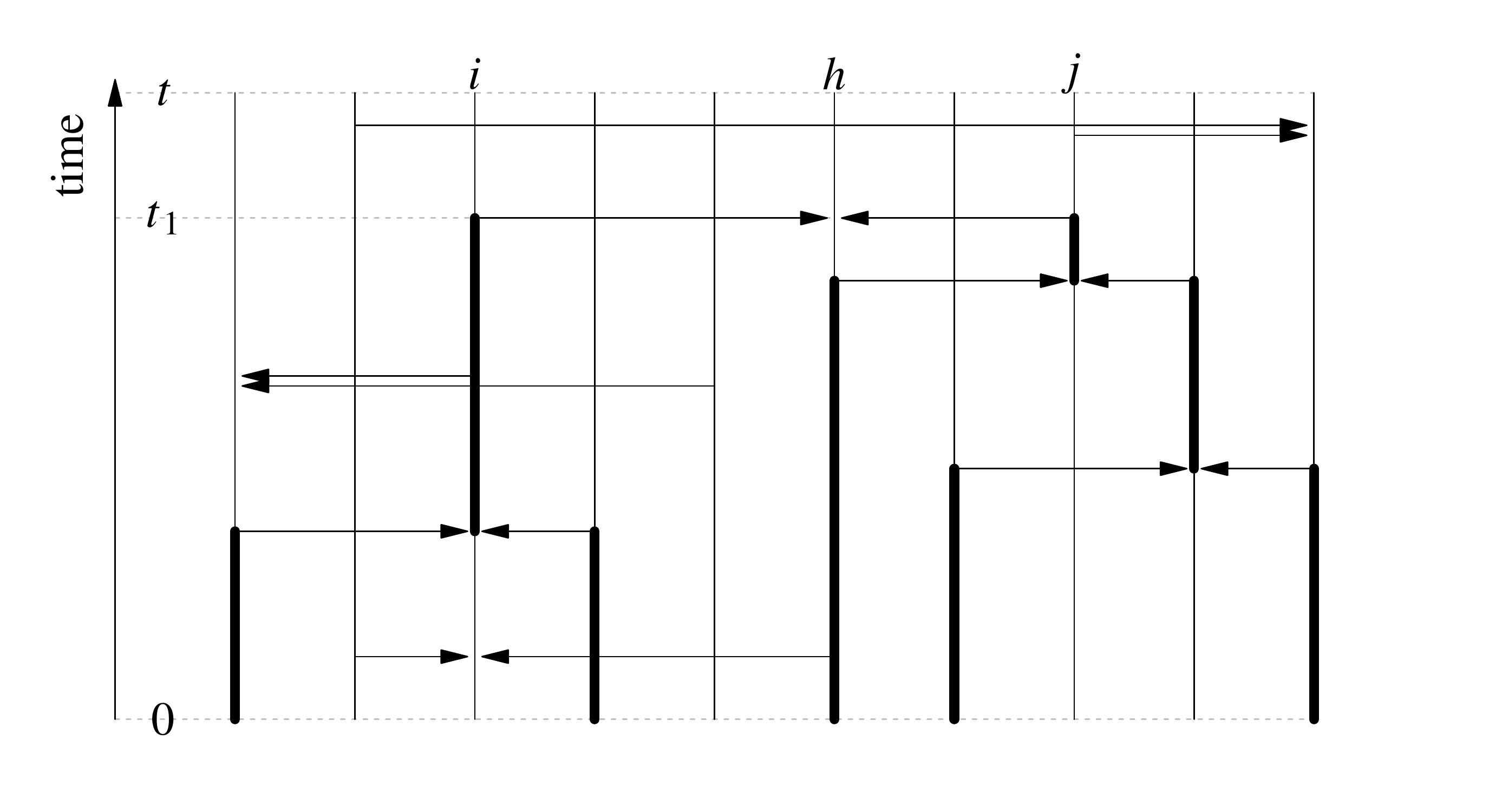}
  \caption{A detail of the bi-parental Moran graph $\mathcal G_N$, with a point of $\Pi_{hij}$ at $t_1$. In this realisation the ancestry of the individual $(h,t_1)$ (drawn with bold lines) is a binary splitting tree $\mathcal T$  with root at  $(h,t_1)$. 
}
\label{Moran2}
\end{center}
\end{figure}

 The graph~$\mathcal G_N$ serves as a random environment  for a coordinated, structured branching process of the population of GEs. Specifically, for $t_1 \in \Pi_{hij}$ (as in Figure \ref{Moran2}),  each GE arriving at  $(i,t_1-)$   tosses a fair coin. In case of ``success'' it puts one offspring at $(h,t_1)$, and in any case it continues to live at $ (i,t_1)$ if $h\neq i$. The same happens for the GEs arriving at $(j,t_1-)$. The population at $(h,t_1-)$ is replaced by the sum of the offspring of the populations at $(i,t_1-)$ and $(j,t_1-)$. 
  
  Now consider the genealogy of the GEs in an {\em annealed} picture, i.e. averaged over $\mathcal G_N$. Then the offspring of a single GE experiences a critical binary branching process with branching rate~$N$ and with ``locally coordinated branching''  in the sense that GEs living in the same host are affected by simultaneous reproduction events. This local coordination of the branching  induces a dependence also between the offspring of different GEs that were present at time $t=0$. It turns out, however, that the population of GEs  is continuously spread quickly enough over the space $\{1,...N\}$ of hosts so that the dependencies introduced by the local coordination become negligible as $N\to \infty$. 
  
  More precisely, let the GE numbers at time $t=0$  be given by a, say,  i.i.d. family $(\zeta_i^N(0))_{i\in [N]}$ of $\mathbb N_0$-valued random variables with finite third moment, and let  $(\zeta_i^N(t))_{i=1,...,N}$ be the GE numbers at time $t$ that arise through the reproduction dynamics along the random graph $\mathcal G_N$ as described above. Then already at time $t_1= \log \log N / N$ (and thus before an effective change of the total number of GEs has occurred) the numbers $\zeta^N_h(t_1)$ of GEs in host $(h,t_t)$ are close to Poisson. 
  
  This can be seen by tracing the ancestry of $(h,t_1)$ in $\mathcal G_N$ back to time $0$; see the bold lines giving rise to a binary tree in Figure~\ref{Moran2}. Thanks to the assumption $t_1=\log \log N / N$, the parental lineages of  $(h,t_1)$ from time $t_1$ down to time $0$ collide only with small probability; hence the  ancestry of $(h,t_1)$ forms with high probability a binary splitting tree~$\mathcal T$ with root at $(h,t_1)$, (order of) $\log \log N$ generations and (order of) $\log N$ leaves at time~0. In the situation of Figure \ref{Moran2} the random number $\zeta^N_h(t_1)$ arises, conditional on $\zeta^N_i(t_1)$ and $\zeta^N_j(t_1)$, as the sum of two independent binomially distributed random variables  with parameters $\zeta^N_i(t_1), \frac 12$ and $\zeta^N_j(t_1), \frac 12$, respectively. Playing this back to time 0 along the tree $\mathcal T$, a reasoning similar as in the proof of Lemma \ref{l:4}  shows that the distribution of $\zeta^N_h(t_1)$ is close to Poisson for large $N$.  
  
  It is easily seen that  the total number of GEs, $NZ^N_t = \sum_{i=1}^N \zeta^N_i(t)$, is a martingale. As time proceeds, the ongoing (quick) Poissonization of $(\zeta_i^N)_{i=1,...,N}$ happens conditional on the current value of $Z^N$.
As it turns out (cf. Proposition \ref{P:2}), the near-Poissonicity of $(\zeta_i^N(t))_{i=1,...,N}$ helps to control the quadratic variation of $Z^N$ and to prove that $Z^N$ converges as $N\to \infty$ to a standard Feller branching diffusion.
\end{remark}
\begin{remark}[Stochastic slow/fast systems]\label{rem:slowfast}
As we will see, $(Z^N, X^N)$ is a slow/fast system, for which $\mathcal{POI}$, the set of Poisson distributions on $\mathbb N_0$, forms a stable manifold. Such systems have been studied intensively, see e.g.\ \cite{Kurtz1992, PardouxVere1, berglund2006noise}. We could not find a result in the general theory which covers the situation of our Theorem, but the method of \cite{Katzenberger1991} comes pretty close. More precisely, as will become clear in the proof of Theorem~\ref{T1}, $Z^N$ is slow in the sense that there is an operator which describes the (asymptotically) fastest part of the dynamics of $X^N$ and which vanishes on functions depending only on the first moment of $x$; see Lemma~\ref{l1} and \eqref{eq:POIrho2}. In other words, the fast dynamics has the property that $\mathcal{POI}$ is invariant. The dynamics is therefore only governed by motions within $\mathcal{POI}$; this as well as the convergence of  $Z^N$ to  $Z$ is guaranteed by Theorem~\ref{T1}. 

The setting of a slow/fast dynamics giving rise to a dynamics on a lower dimensional manifold was given (for finite-dimensional semimartingales) in \cite{Katzenberger1991}. We failed to show the conditions of  \cite{Katzenberger1991} on the convergence towards the manifold, since we are lacking a general bound how the fast dynamics pushes the system to the manifold.\footnote{We will see in the proof of Corollary \ref{cor1} that  the difference between the second factorial moment and the square of the first moment of $X_t^N$ converges  exponentially fast to $0$. With additional efforts, and under suitable assumptions on the initial condition,  it might be possible  to obtain by similar techniques an analogous result also for all the higher factorial moments. However, it remains unclear whether such a result would help to meet the conditions of Theorem 5.1 of \cite{Katzenberger1991} (when extended to the infinite-dimensional setting).} Rather, we use martingale arguments for this convergence; see Proposition~\ref{P:3}. These are not sufficient to show convergence in the Skorokhod sense (as in \cite{Katzenberger1991}), but only in the sense of convergence of occupation measures. Thus, the question whether $X^N$ converges to $X$ in the Skorokhod sense on each time interval $[\varepsilon, \infty)$ remains open.

\end{remark}

\begin{remark}[Convergence in measure] \label{Remtop}
Tightness criteria for c\`adl\`ag processes with respect to convergence in measure were given in \cite{MeyerZheng1984}, and refined in \cite{Kurtz1991}. In contrast to these approaches, we show convergence in measure of $X^N$ in two steps. First, we show tightness of $\Gamma_{X^N}$ (as random probability measures on $[0,\infty) \times \mathcal P(\mathbb N_0))$. In a second step, we show that any limit $\Gamma$ must a.s. be concentrated on $[0,\infty)\times \mathcal{POI}$. We then prove convergence of $Z^N$ to the Feller branching diffusion $Z$ and conclude that $\Gamma = \Gamma_X$ for $X = ({\rm Poi}(Z_t))_{t\geq 0}$; see Section~\ref{ss:35}.
\end{remark}

\begin{remark}[Moment assumptions] \label{MomAss}
The assumption of uniform boundedness of the expectation of the third moment of $X_0^N$  translates via Corollary \ref{cor1} to $X_t^N$. This is used in the proof of the key Proposition \ref{P:3} to obtain the uniform integrability of the second moment of $Y^N$, where $Y^N$ is distributed according to the weighted occupation measure of $X^N$. One may conjecture that the uniform integrability of the second moments of $X_0^N$  is enough to guarantee the uniform integrability of $\rho_2(Y^N)$. Proving this, however, seems to require considerable additional technical efforts.
\end{remark}
\begin{remark}[Transposable elements]
  Let us briefly explain the assumptions on the dynamics of $X^N$ as a model for the evolution of transposable (genetic) elements (GEs). In the  model that underlies \eqref{eq:jumpRate}, we implicitly assume that each individual is diploid in the sense that it consists of two sets of chromosomes. The assumption that the GEs of both parents are inherited independenly is satisfied when GEs jump frequently within the genome, and GEs on each chromosome are not inherited together, which is the case in a high-recombination limit (also referred to as free recombination).
  Under these assumptions, whenever an individual carrying a total of $k$ GEs produces an offspring, it passes on one copy of its set of chromosomes, which carries a 
  binomially distributed number of GEs with parameters $k,\tfrac 12$. Since the offspring has two parents with $k$ and $l$ GEs, it thus has a binomially distributed number of GEs with parameters $k+l,\tfrac 12$. From this explanation we find the two main assumption for the biological dynamics on diploids:
  \begin{itemize}
      \item GEs jump frequently;
      \item Recombination between GEs is free.
  \end{itemize}
\end{remark}

\begin{remark}[Context and novelty of the model]
Bi-parental population models have been studied to some extent. \cite{Chang1999} and \cite{pmid15457259} analyse the common ancestry of all living humans. \cite{Coron2020GeneticsOT} study the distribution of the genetic material which an ancestor contributes to today's population. They consider a scaling limit of a biparental model in which first the number of generations and then the population size tends to infinity. Other population genetic models (e.g.\ \citealp{lambert2021chromosome}) implicitly assume two parents by using an ancestral recombination graph \citep{GriffithsMarjoram1997}. Wakeley and co-authors study the effect of the bi-parental pedigree on the evolution of allele frequencies (e.g.\ \citealp{   pmid22234858, pmid27432946}). Additional related work has been done by  \cite{baird2003distribution} (for a study of the amount of genetic material from a single individual which is present anywhere in the population after some time) and \cite{foutel2022convergence} (for a branching process approximation within the same model).

A novelty in our model lies in the study of a large population on the evolutionary timescale (i.e.\ one unit of time is $O(N)$ generations) together with a free recombination on the generation timescale, in the sense that the two parents of each individual are effectively involved at each reproduction event.  Thus, in our setting we are able to combine  the rescaling of time with a rapid {\em and} free recombination, and to prove convergence in the limit of large populations on the evolutionary timescale.
In spirit, our model fits the framework of the Poisson Random Field approach taken by \cite{SawyerHartl1992}; see also \cite{PRFtutorial2008} for a review. In such models, all loci evolve independently due to free recombination. We do not model genomic loci explicitly  but consider the total number of GEs, which are distributed somewhere in the genome. Starting from an individual-based finite poplation model, we show that this total number is for large populations asymptotically Poisson in each individual genome, with a random intensity that follows Feller's branching diffusion on the evolutionaly timescale.
\end{remark}

\begin{remark}[Extensions]
  The population model with jump rates given through \eqref{eq:jumpRate} is neutral in the sense that (i) the number of GEs does not change on average in all individuals and (ii) the probability to be involved in a reproduction event does not depend on the number of GEs an individual carries. Both assumption can be relaxed. For (i), we might assume that an individual of type $k$ acquires new GEs at rate $\mu + k\nu$, and each GE is lost (or silenced) at rate $\beta$. For (ii), we might assume that an individual of type $k$  is chosen as a parent with probability proportional to $(1-\alpha/N)^k \approx 1- k\alpha/N$, for some $\alpha \in \mathbb R$. Under this selective model, we strongly conjecture that Theorem~\ref{T1} still holds, but with \eqref{FBD} changed to
  
  \vspace{-0.3cm}
  $$ dZ = (\mu + (\nu - \beta - \alpha)Z) dt + \sqrt{Z}dW,$$ 
  i.e.\ to a non-critical Feller branching diffusion with immigration.
\end{remark}

\section{Proof of Theorem~\ref{T1}}\label{Secproof}
The arguments from the graphical construction in Remark~\ref{rem:heuristics} give some intuition why Theorem~\ref{T1} holds.
However, our proof proceeds via a different route. The main steps are as follows: After introducing some notation in Section~\ref{ss:31}, we analyse the generator of $X^N$ in Section~\ref{ss:32} by collecting terms which are of order $N^1, N^0, N^{-1},...$, i.e.\ $G^N = NG_1 + G_0 + O(N^{-1})$; see Lemma~\ref{l2}. Moreover, we will see that $G_1 f = 0$ if $f$ only depends on $x$ via its first moment $\rho_1(x)$; see Lemma~\ref{l1}. For convergence to the manifold of Poisson distributions
we require only the highest order term, $G_1$. In Section~\ref{ss:33}, we first give a characterization of random Poisson distribution in Lemma~\ref{l:4} and use this in order to show that the limit of occupation measures $\Gamma_{X^N}$ is concentrated on Poisson distributions using  martingale arguments; see Proposition~\ref{P:3}. Then, in Section~\ref{ss:34}, we use these results in order to show convergence of $Z^N$ as $N\to \infty$; see Proposition~\ref{P:2}. Finally, in Section~\ref{ss:35} we collect these insights to complete the proof of Theorem~\ref{T1}. 

\subsection{Notation and basics}\label{ss:31} 
\begin{definition}[Moments, generating functions, state space]
\label{def:1}
\begin{enumerate}[wide, labelindent=0pt]
\item \sloppy We identify a probability measure $x$ on $\mathbb N_0$ with the sequence $(x_0, x_1,...)$ of its weights. For $j=1,2,...$, we denote by 
    $$  \rho_j(x) := \sum_{k=j}^\infty k \cdots (k-j+1) x_k$$
the $j$th factorial moment of $x$. In particular, $\rho_1(x)$ is the mean of $x$. We put
    \begin{align}\label{eq:psis} \psi_s(x) := \sum_{k=0}^\infty x_k(1-s)^k, \qquad s\in[0,1].
    \end{align}
    \item The state space of $X^N$ is
    \begin{align}\label{eq:E}
    E_N := \Big\{x \in \mathcal P(\mathbb N_0): N x_k  \in \mathbb N_0 \mbox{ for all } k\in \mathbb N_0    \Big\}.
\end{align}
\end{enumerate}
\end{definition}
\begin{remark}\label{rempsirho}
\begin{enumerate}[wide, labelindent=0pt]
\item For $x\in \mathcal P(\mathbb N_0)$ all of whose moments are finite we have
\begin{equation}
    \label{eq:psis2}
    \begin{aligned}
    \rho_n(x) & = \sum_{k=n}^\infty k(k-1)\cdots (k-n+1) x_k = (-1)^n \frac{\partial^n}{\partial s^n} \psi_s(x)\Big|_{s=0}, && n=1,2,... \\
      \psi_s(x) & = \sum_{k=0}^\infty (1-s)^k x_k = 1 + \sum_{n=1}^\infty \rho_n(x) \frac{(-s)^n}{n!}, && s\in [0,1], \\ 
    \end{aligned}
\end{equation}
where the last equality holds provided the series converges (which is certainly true for $x\in E_N$ or $x \in \mathcal{POI}$).  
\item An $x \in \mathcal P(\mathbb N_0)$ equals  $\rm{Poi}(\lambda)$ if and only if either of the following conditions (i) or (ii)  is satisfied:
$${\rm (i)} \quad \psi_s(x) = e^{-\lambda s}, \; s\in [0,1] \qquad {\rm (ii)} \quad  \rho_n(x) = \lambda^n, \; n=1,2,...$$
In particular we will make use of the fact that $\rho_2(x) - \rho_1^2(x) = 0$  for $x\in\mathcal{POI}$.
\end{enumerate}
\end{remark}
\subsection{Analysing the generator of \texorpdfstring{$X^N$}{XN}} \label{ss:32}
\noindent
We will now analyse the generator $G^N$ of $X^N$. Using the jump rates from \eqref{eq:jumpRate}, we find for  $f:E_N\to \mathbb R$, and $e_m$ the $m$th unit vector, 
\begin{align}\label{eq:GN}
  G^N f(x) & = \frac{N^2}{2} \sum_{m,n}  x_n \sum_{k,l} x_k x_l    \binom{k+l}{m} 2^{-(k+l)}\big(f(x + (e_m - e_n)/N) - f(x)\big).
\end{align}
First, we will analyse the action of the generator on functions only depending on $\rho_1(x)$ (Lemma~\ref{l1}), and on generating functions (Lemma~\ref{l2}). Afterwards, we are dealing with control of second and third moments, which we achieve by taking derivatives of the generating functions (Corollary~\ref{cor1}). In the next section, in Proposition \ref{P:3}, we will see that $|\rho_2(X^N_t) - \rho_1^2(X^N_t)|$ becomes small (cf. \eqref{eq:POIrho2}). Together with the following lemma, this points to the fact that $G^N$ acts on functions of the form $x\mapsto g(\rho_1(x))$ asymptotically like the generator of Feller's branching diffusion.

\begin{lemma}\label{l1}
  Let $G^N$ be as in \eqref{eq:GN} and $f$ be of the form $f(x) = g(\rho_1(x))$ for some $g\in\mathcal C_b^2(\mathbb R_+)$, thus \ $f$ only depends on the first moment of $x$. Then, 
  $$(G^N (g\circ \rho_1))(x) = \tfrac 12 \big(\rho_1(x) + \tfrac 34(\rho_2(x) - \rho_1^2(x))\big)g''(\rho_1(x)) + o(1) \quad \mbox{as }N\to \infty. $$
\end{lemma}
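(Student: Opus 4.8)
The plan is to reduce the action of $G^N$ on $f=g\circ\rho_1$ to a one-dimensional Taylor expansion and to track which powers of $N$ survive after multiplication by the rate prefactor $N^2/2$. Since $\rho_1$ is linear with $\rho_1(e_m-e_n)=m-n$, each jump $x\mapsto x+(e_m-e_n)/N$ shifts the argument of $g$ by $h:=(m-n)/N$, so $f(x+(e_m-e_n)/N)=g(a+h)$ with $a:=\rho_1(x)$; moreover, on the support of the jump rates ($x_n>0$, hence $x_n\ge N^{-1}$ since $x\in E_N$) the image is again a probability measure on $\mathbb N_0$, so $a+h=\rho_1(x+(e_m-e_n)/N)\ge 0$. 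By Taylor's theorem $g(a+h)-g(a)=g'(a)h+\tfrac12 g''(a)h^2+R$ with $R=\tfrac12\big(g''(\xi)-g''(a)\big)h^2$ for some $\xi\in\mathbb R_+$ between $a$ and $a+h$. I would substitute this into \eqref{eq:GN} and handle the linear, quadratic, and remainder contributions in turn, keeping $x\in E_N$ fixed (so $x$ has finite support and all moments are finite).

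For the linear term, using that $(\binom{k+l}{m}2^{-(k+l)})_m$ is the $\mathrm{Bin}(k+l,\tfrac12)$-distribution, the prefactor $N^2/2$ against $h$ leaves a factor $N/2$ times $g'(a)\sum_n x_n\sum_{k,l}x_kx_l(\tfrac{k+l}2-n)=g'(a)\big(\rho_1(x)-\rho_1(x)\big)=0$, so the linear term vanishes identically. For the quadratic term, $N^2/2$ against $h^2$ leaves a factor $\tfrac12$, and one is left with $\tfrac14 g''(a)\sum_n x_n\sum_{k,l}x_kx_l\big[\tfrac{k+l}4+(\tfrac{k+l}2-n)^2\big]$, the bracket being the second moment of $\mathrm{Bin}(k+l,\tfrac12)-n$. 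Expanding the square and using $\sum_k k^2x_k=\rho_2(x)+\rho_1(x)$ and $\sum_k kx_k=\rho_1(x)$, the double sum reduces, by elementary moment bookkeeping, to $2\rho_1(x)+\tfrac32(\rho_2(x)-\rho_1^2(x))$, giving exactly $\tfrac12\big(\rho_1(x)+\tfrac34(\rho_2(x)-\rho_1^2(x))\big)g''(\rho_1(x))$.

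The only substantial point is that the remainder contribution
\begin{equation*}
\frac{N^2}{2}\sum_{m,n}x_n\sum_{k,l}x_kx_l\binom{k+l}{m}2^{-(k+l)}\,\big|R\big(a,(m-n)/N\big)\big|
\end{equation*}
is $o(1)$. Since $m$ and $n$ are unbounded, $h=(m-n)/N$ is not uniformly small, so a single Taylor estimate will not do; I would combine two bounds on $R$. Boundedness of $g''$ gives $|R(a,h)|\le\|g''\|_\infty h^2$ for every $h$, while continuity of $g''$ at $a=\rho_1(x)$ gives $|R(a,h)|\le\tfrac12\omega(\delta)h^2$ whenever $|h|\le\delta$, where $\omega(\delta):=\sup\{|g''(u)-g''(a)|:u\in\mathbb R_+,\ |u-a|\le\delta\}\to 0$ as $\delta\downarrow 0$. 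Because $\tfrac{N^2}{2}h^2=\tfrac12(m-n)^2$, the indices with $|m-n|\le\delta N$ contribute at most $\tfrac14\omega(\delta)M_2$, where $M_2:=\sum_n x_n\sum_{k,l}x_kx_l\big[\tfrac{k+l}4+(\tfrac{k+l}2-n)^2\big]<\infty$ (finite second moment), while the indices with $|m-n|>\delta N$ contribute at most $\tfrac12\|g''\|_\infty\sum_{|m-n|>\delta N}x_nx_kx_l\binom{k+l}{m}2^{-(k+l)}(m-n)^2$, which for each fixed $\delta$ tends to $0$ as $N\to\infty$ by dominated convergence (the index set shrinks to $\emptyset$ and $M_2<\infty$ dominates). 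Taking $\limsup_{N\to\infty}$ and then $\delta\downarrow 0$ shows the remainder is $o(1)$. I expect this last step to be the main obstacle — the unbounded range of $m$ and $n$ forces the two-regime estimate — whereas the linear and quadratic terms are routine bookkeeping with the first two moments of a binomial.
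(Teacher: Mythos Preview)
Your proposal is correct and follows essentially the same route as the paper: Taylor-expand $g$ around $\rho_1(x)$, observe that the first-order (drift) term cancels because the binomial mean $\tfrac{k+l}{2}$ averages to $\rho_1(x)$, and evaluate the second-order term via the first two binomial moments to obtain $\tfrac12\big(\rho_1+\tfrac34(\rho_2-\rho_1^2)\big)g''(\rho_1)$. Your treatment of the remainder is in fact more careful than the paper's, which simply absorbs it into an unjustified $o(N^{-2})$ inside the Taylor expansion; your two-regime argument (splitting at $|m-n|\le\delta N$ and using continuity of $g''$ together with the finite second-moment domination $M_2<\infty$) makes rigorous precisely the step the paper glosses over.
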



\begin{proof}
  Since $$g(\rho_1(x + (e_m-e_n)/N)) = g(\rho_1(x)) + N^{-1} g'(\rho_1(x))(m-n) + \tfrac 12 g''(\rho_1(x)) N^{-2}(m-n)^2 + o(N^{-2}),$$ we write
  \begin{align*}
      G^N & = NG_1 + G_0 + o(1), \\ 
      (G_1 (g\circ \rho_1))(x) & = \tfrac{1}{2}g'(\rho_1(x)) \sum_{m,n}  x_n \sum_{k,l} x_k x_l    \binom{k+l}{m} 2^{-(k+l)}(m-n)  \\ & = \tfrac 12g'(\rho_1(x))\Big( \sum_{k,l} x_k x_l \frac{k+l}{2}\underbrace{\sum_{m} \binom{k+l-1}{m-1}2^{-(k+l-1)}}_{=1} - \sum_n n x_n\Big) \\ & = \tfrac 12g'(\rho_1(x))\Big(\sum_k x_k \tfrac k2 + \sum_l x_l \tfrac l2 - \sum_n nx_n\Big)= 0, 
      \end{align*}
   \begin{align*}    
       (G_0 (g\circ \rho_1))(x) & =  \tfrac{1}{4}g''(\rho_1(x)) \sum_{m,n}  x_n \sum_{k,l} x_k x_l    \binom{k+l}{m} 2^{-(k+l)} \\ & \qquad \qquad \qquad \qquad \qquad \qquad \cdot (m(m-1) + n(n-1) -2mn + m + n)   \\ & = \tfrac{1}{4}g''(\rho_1(x))\Big( \sum_{k,l} x_k x_l \underbrace{\frac{(k+l)(k+l-1)}{4}}_{=(k(k-1)+l(l-1) + 2kl)/4} \\ & \qquad \qquad \qquad \qquad \qquad \qquad + \sum_n n(n-1)x_n - 2\rho^2_1(x) + 2\rho_1(x)\Big) \\ & = \tfrac 14 g''(\rho_1(x)) (\tfrac 12 \rho_2(x) + \tfrac 12 \rho_1^2(x) + \rho_2(x) - 2\rho_1^2(x) + 2\rho_1(x)) \\ & = \tfrac 12 g''(\rho_1(x)) (\rho_1(x) + \tfrac 34\rho_2(x) - \tfrac 34\rho_1^2(x)), 
  \end{align*}
  and the result follows. 
\end{proof}

\noindent
Now we analyse the structure of $G^N$ by collecting terms for the same powers in $N$, when using products of $\psi_s$.

\begin{lemma}\label{l2}
  \sloppy Let $G^N$ be as in \eqref{eq:GN}. For $\ell=1, 2,...$ and  $s:=(s_1,...,s_\ell) \in [0,1]^\ell$, set
  \begin{align}\label{eq:fs}
    f_s(x) & := \prod_{i=1}^\ell \psi_{s_i}(x).
  \end{align}
  Then there exist operators $G_1, G_0, G_{-1},\ldots$ obeying
  \begin{equation}\label{Grecursion}
  \begin{aligned}
    G^N f_s & = \sum_{i=1}^\ell N^{2-i} G_{2-i}f_s, 
    \\ 
    G_{2-i}f_s & = \tfrac 12\sum_{    \genfrac{}{}{0pt}{}{J\subseteq \{1,...,\ell\}}{|J| = i}} \Big(\prod_{j\notin J} \psi_{s_j}\Big) \sum_{K\subseteq J} (-1)^{|J\setminus K|} \psi^2_{(1-(1-s_K))/2} \psi_{1-(1-s_{J\setminus K})},
\end{aligned}
  \end{equation}
where the family of subsets $K\subseteq J$ includes also the empty set and $(1-s_K) :=\prod_{j\in K}(1-s_j)$ (and the empty product equals~1).
\end{lemma}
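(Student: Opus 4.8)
The plan is a direct substitution of $f_s$ into the generator \eqref{eq:GN}, using crucially that $\psi_s$ is \emph{affine} in $x$. First I would record the exact identity $\psi_s(x + (e_m-e_n)/N) = \psi_s(x) + \tfrac1N\big((1-s)^m - (1-s)^n\big)$, which holds for any increment since $\psi_s$ is linear. Inserting this into $f_s = \prod_{i=1}^\ell \psi_{s_i}$ and expanding the product according to which factors pick up the $\tfrac1N$-term gives the \emph{exact}, finite expansion
\[
 f_s\big(x + (e_m-e_n)/N\big) - f_s(x) = \sum_{\emptyset \neq J \subseteq \{1,\dots,\ell\}} N^{-|J|}\Big(\prod_{j\notin J}\psi_{s_j}(x)\Big)\prod_{j\in J}\big((1-s_j)^m - (1-s_j)^n\big),
\]
the $J=\emptyset$ term cancelling against $-f_s(x)$; since $x\in E_N$ has finite support there are no convergence issues. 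Multiplying by $N^2/2$ as in \eqref{eq:GN} and grouping by $|J|=i$ yields $G^N f_s = \sum_{i=1}^\ell N^{2-i} G_{2-i} f_s$, where $G_{2-i}f_s(x) = \tfrac12 \sum_{|J|=i}\big(\prod_{j\notin J}\psi_{s_j}(x)\big)\, S_J(x)$ with $S_J(x) := \sum_{m,n} x_n \sum_{k,l} x_k x_l \binom{k+l}{m}2^{-(k+l)}\prod_{j\in J}\big((1-s_j)^m-(1-s_j)^n\big)$. So the whole content of the lemma is the evaluation of $S_J$.

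\textbf{Evaluating $S_J$.} Expand $\prod_{j\in J}\big((1-s_j)^m - (1-s_j)^n\big)$ over subsets $K\subseteq J$: the factors indexed by $K$ contribute their $m$-power, those indexed by $J\setminus K$ their $n$-power, with sign $(-1)^{|J\setminus K|}$. Using the paper's notation $(1-s_K) := \prod_{j\in K}(1-s_j)$, this turns the $m$-dependence into $(1-s_K)^m$ and the $n$-dependence into $(1-s_{J\setminus K})^n$. Three elementary summations then finish the job: $\sum_n x_n (1-s_{J\setminus K})^n = \psi_{1-(1-s_{J\setminus K})}(x)$ by definition \eqref{eq:psis}; the binomial theorem gives $\sum_m \binom{k+l}{m}2^{-(k+l)}(1-s_K)^m = \big(\tfrac{1+(1-s_K)}{2}\big)^{k+l}$; and the double sum over $k,l$ then factorizes, $\sum_{k,l} x_kx_l\big(\tfrac{1+(1-s_K)}{2}\big)^{k+l} = \psi^2_{(1-(1-s_K))/2}(x)$, since $1 - \tfrac{1+(1-s_K)}{2} = \tfrac{1-(1-s_K)}{2}$. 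Summing over $K\subseteq J$ gives $S_J(x) = \sum_{K\subseteq J}(-1)^{|J\setminus K|}\psi^2_{(1-(1-s_K))/2}(x)\,\psi_{1-(1-s_{J\setminus K})}(x)$, and substituting back produces \eqref{Grecursion}; this computation simultaneously establishes the expansion and identifies the operators $G_{2-i}$ through their action on the family $\{f_s\}$.

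\textbf{Where the difficulty lies.} There is no genuine obstacle: the argument is a chain of exact algebraic identities. The only points requiring care are the consistent translation between the variables $s_j$ and the arguments $\tfrac{1-(1-s_K)}{2}$ and $1-(1-s_{J\setminus K})$ of the $\psi$'s, and checking the boundary cases $K=\emptyset$ and $K=J$, for which the empty product $(1-s_\emptyset)=1$ gives $\psi^2_{0}=1$ respectively $\psi_{0}=1$, consistent with the convention stated after \eqref{Grecursion}. One should also note that all the new arguments lie in $[0,1]$, so $f_s$ remains within the class to which the identities of Remark~\ref{rempsirho} apply; in particular the $i=1$ term (a single $|J|=1$) is the highest-order operator $G_1$ entering Lemma~\ref{l1} and governing the fast dynamics.
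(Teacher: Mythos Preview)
Your proof is correct and follows essentially the same route as the paper: the affine identity for $\psi_s$, the exact subset expansion over $J\subseteq\{1,\dots,\ell\}$ grouped by $|J|=i$, and then the evaluation of $S_J$ via the inner subset expansion over $K\subseteq J$ together with the three summations (over $n$, over $m$ via the binomial theorem, and the factorization over $k,l$) are exactly the steps carried out in the paper's proof. Your additional remarks on the boundary cases $K=\emptyset$, $K=J$ and on the arguments of $\psi$ staying in $[0,1]$ are helpful sanity checks that the paper leaves implicit.
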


\begin{remark}[Form of $G_1$ and $G_0$]
Note that $G_{2-i}$ is of order $i$ in the sense that $G_{2-i}\psi_s$ is a sum over all possible subsets $J \subseteq \{1,...,\ell\}$ of cardinality $i$, where it leaves all factors $(\psi_{s_j})_{j \notin J}$ untouched and only acts on the factors $(\psi_{s_j})_{j \in J}$. So, in order to compute $G_1\psi_s$ and $G_0\psi_s$, it suffices to give these terms for $\ell=1$ and $\ell=2$, respectively. For $r,s \in [0,1]$, we find from \eqref{Grecursion}
\begin{equation}
  \label{eq:lfour3}
  \begin{aligned}
    2G_{1} \psi_s & = \psi_{s/2}^2 - \psi_s,\phantom{AAAAAA}
    \\ 2G_0 \psi_r \psi_s & = \psi_{1 - (1-r)(1-s)} - \psi_{r/2}^2\psi_s - \psi_{r}\psi_{s/2}^2 + \psi_{(1-(1-r)(1-s))/2}^2.
\end{aligned}
\end{equation}

\end{remark}

\begin{proof}
  We obtain, collecting terms proportional to $N^{2-i}$ for $i=1,...,\ell$, 
\begin{equation*}
    \label{eq:lfour2}
  \begin{aligned}
    G^Nf_s(x) & = \tfrac{N^2}{2}       \sum_{m,n}  x_n \sum_{k,\ell} x_k x_l \binom{k+\ell}{m} 2^{-(k+\ell)} \\ & \qquad \qquad \qquad \qquad \qquad \qquad \cdot \Big( \prod_{i=1}^\ell \Big(\underbrace{\psi_{s_i}(x) + N^{-1}((1-s_i)^m - (1-s_i)^n)\Big)}_{=\psi_{s_i}(x + (e_m - e_n)/N)} - f_s(x)\Big) \\ & = \tfrac 12 \sum_{m,n}  x_n \sum_{k,\ell} x_k x_\ell    \binom{k+\ell}{m} 2^{-(k+\ell)} \sum_{i=1}^\ell N^{2-i} \\ & \qquad \qquad \qquad \qquad \qquad \qquad \sum_{\genfrac{}{}{0pt}{}{J\subseteq \{1,...,\ell\}}{|J| = i}} \prod_{j\in J}((1-s_j)^m - (1-s_j)^n) \prod_{j\notin J} \psi_{s_j}(x) \\ & = \sum_{i=1}^\ell N^{2-i} G_{2-i} f_s(x)
    \end{aligned}
    \end{equation*}
    with
    \begin{equation}
    \label{eq:lfour2b}
  \begin{aligned}
    2G_{2-i}f_s(x) & = \!\!\!\!\!\! \sum_{\genfrac{}{}{0pt}{}{J\subseteq \{1,...,\ell\}}{|J| = i}}\!\!\! \Big(\prod_{j\notin J} \psi_{s_j}(x)\Big) \sum_{m,n}  x_n \sum_{k,\ell} x_k x_\ell    \binom{k+\ell}{m} 2^{-(k+\ell)} \prod_{j \in J} ((1-s_j)^m -  (1-s_j)^n) \\ & = \!\!\!\!\!\! \sum_{\genfrac{}{}{0pt}{}{J\subseteq \{1,...,\ell\}}{|J| = i}} \Big(\prod_{j\notin J} \psi_{s_j}(x)\Big) \sum_{K\subseteq J} (-1)^{|J\setminus K|} \sum_{k,\ell,m} x_k x_\ell    \binom{k+l}{m} 2^{-(k+\ell)}  \Big(\prod_{j \in K} (1-s_j)\Big)^m  \\ & \qquad \qquad \qquad \qquad \qquad \qquad \qquad \qquad \qquad \qquad \cdot \sum_{n}  x_n \Big(\prod_{j \in J\setminus K} (1-s_j)\Big)^n.
    \end{aligned}
\end{equation}
Taking the sum over $m$, the right hand side of \eqref{eq:lfour2b} turns into \begin{equation*}
\sum_{\genfrac{}{}{0pt}{}{J\subseteq \{1,...,\ell\}}{|J| = i}} \Big(\prod_{j\notin J} \psi_{s_j}(x)\Big) \sum_{K\subseteq J} (-1)^{|J\setminus K|} \sum_{k,l} x_k x_\ell \Big(\frac{1 + \prod_{j \in K} (1-s_j)}{2} \Big)^{k+\ell} 
\psi_{1 - \prod_{j \in J\setminus K}(1-s_j)}(x)
\end{equation*}
\vspace{-0.9cm}
\begin{equation*}
\begin{aligned}
 \\ & =  \sum_{\genfrac{}{}{0pt}{}{J\subseteq \{1,...,\ell\}}{|J| = i}} \Big(\prod_{j\notin J} \psi_{s_j}(x)\Big) \sum_{K\subseteq J} (-1)^{|J\setminus K|} \psi_{(1 -  (1-s_K))/2}^2(x) \psi_{1 - (1-s_{J\setminus K})}(x).
\end{aligned}
\end{equation*}
We have thus obtained \eqref{Grecursion}. From the form of $G_{2-i}$ we see that $G_{2-i} f_s = 0$ if $i> \ell$ (since there is no $J \subseteq \{1,...,\ell\}$ with $|J| = i$ and the outer sum is empty). 
\end{proof}
\begin{corollary}[Martingale property of $Z^N$ and uniform bounds on 2nd and 3rd moments]\label{cor1}
a) The process $Z^N = \rho_1(X^N_\cdot)$ is a martingale. \\
b) For all $t$ and $N$, and for some constants $C,C'<\infty$, which do not depend on $N$,
  \begin{align}\label{secondmom}
  \mathbb E[\rho_2(X^N_t)] & \leq   \mathbb E[\rho_1(X^N_0)] (t+1) +  \mathbb E[\rho_2(X^N_0)],
  \\ \label{thirdmom}
  \mathbb E[\rho_3(X^N_t)] & \leq C t + C'\mathbb E[\rho_3(X^N_0)].
  \end{align}
\end{corollary}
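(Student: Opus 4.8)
The plan is to read off both assertions from the action of $G^N$ on the factorial moments $\rho_n$ and on monomials in $\rho_1$, which in turn follows by differentiating the generating-function formula of Lemma~\ref{l2} at $s=0$. Since for $x\in E_N$ the measure $x$ has finite support, all sums in \eqref{eq:GN} are finite and $s\mapsto\psi_s(x)$ is a polynomial in $1-s$, so this differentiation is legitimate. From $\ell=1$ in \eqref{eq:lfour3}, $G^N\psi_s=\tfrac N2(\psi_{s/2}^2-\psi_s)$; writing $\psi_{s/2}^2(x)=\sum_{k,l}x_kx_l(1-s/2)^{k+l}$ and using the falling-factorial identity $(k+l)^{\underline n}=\sum_{j=0}^n\binom{n}{j}k^{\underline j}\,l^{\underline{n-j}}$ one obtains
\[
  G^N\rho_n=\tfrac N2\Bigl(2^{-n}\textstyle\sum_{j=0}^n\binom{n}{j}\,\rho_j\rho_{n-j}-\rho_n\Bigr),\qquad n\ge 1 ,
\]
so in particular $G^N\rho_1=0$, $G^N\rho_2=\tfrac N4(\rho_1^2-\rho_2)$ and $G^N\rho_3=\tfrac{3N}{8}(\rho_1\rho_2-\rho_3)$. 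Differentiating the $\ell=2$ and $\ell=3$ versions of \eqref{Grecursion} likewise gives $G^N(\rho_1^2)=\rho_1+\tfrac34(\rho_2-\rho_1^2)$ (this is also Lemma~\ref{l1} with $g(y)=y^2$, the $o(1)$ being exactly $0$ for quadratic $g$), $G^N(\rho_1\rho_2)=\tfrac N4(\rho_1^3-\rho_1\rho_2)+R_2$ and $G^N(\rho_1^3)=3\rho_1^2+\tfrac94\rho_1(\rho_2-\rho_1^2)+R_3$, where $R_2,R_3$ are the subleading terms (of order $N^0$, resp.\ $N^{-1}$), each an $N$-independent linear combination of $\rho_3,\rho_1\rho_2,\rho_1^3,\rho_2,\rho_1^2,\rho_1$ (those in $R_3$ carrying an extra factor $N^{-1}$).

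For a), $G^N\rho_1=0$ identifies $Z^N$ as a nonnegative local martingale: equivalently $NZ^N=\sum_i\zeta_i^N$ changes only at reproduction times and each jump has conditional mean $0$, so stopping when the number of jumps or $NZ^N$ first exceeds a level makes it an honest martingale, and letting the level $\to\infty$ gives the local-martingale property. A nonnegative local martingale is a supermartingale, so $\mathbb E[Z^N_u]\le\mathbb E[Z^N_0]=\mathbb E[\rho_1(X^N_0)]$ for all $u$. Consequently the expected total variation of the jumps of $Z^N$ on $[0,t]$ is at most $N\int_0^t\mathbb E[Z^N_u]\,du\le Nt\,\mathbb E[\rho_1(X^N_0)]<\infty$ by hypothesis, whence $Z^N$ is of integrable variation with vanishing compensator, i.e.\ a genuine martingale.

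For b) I would apply Dynkin's formula to $\rho_2$ and $\rho_1^2$. Since each jump changes $\rho_j$ by a finite amount (offspring GE numbers being a.s.\ finite), $\rho_3(X^N_t)<\infty$ a.s., and a routine localization at $\tau_M:=\inf\{t:\rho_3(X^N_t)>M\}$ followed by $M\to\infty$ (the estimates being uniform in $M$) yields, with $h(t):=\mathbb E[\rho_2(X^N_t)]$, $g(t):=\mathbb E[\rho_1(X^N_t)^2]$ and $a_0:=\mathbb E[\rho_1(X^N_0)]$, the closed system $h'=\tfrac N4(g-h)$, $g'=a_0+\tfrac34(h-g)$. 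Hence $D:=h-g$ solves $D'=-\tfrac{N+3}{4}D-a_0$, so $D(t)=D(0)e^{-(N+3)t/4}-\tfrac{4a_0}{N+3}\bigl(1-e^{-(N+3)t/4}\bigr)$, and inserting this into $h(t)=h(0)-\tfrac N4\int_0^t D(u)\,du$ gives
\[
  h(t)=h(0)+\tfrac{N}{N+3}a_0t-\Bigl(\tfrac{N}{N+3}D(0)+\tfrac{4N}{(N+3)^2}a_0\Bigr)\bigl(1-e^{-(N+3)t/4}\bigr).
\]
If $D(0)\ge 0$ the subtracted quantity is $\ge 0$, whence $h(t)\le h(0)+a_0t$; if $D(0)<0$ one uses $|D(0)|=g(0)-h(0)\le a_0$ (from the pointwise $\rho_1(x)^2\le\rho_2(x)+\rho_1(x)$) and the fact that the last term above is $\le 0$ to get $h(t)\le h(0)+a_0t+|D(0)|\le h(0)+a_0(t+1)$. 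In both cases this is exactly \eqref{secondmom}.

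For \eqref{thirdmom} I would run the same scheme for the triple $\bigl(\mathbb E[\rho_3],\mathbb E[\rho_1\rho_2],\mathbb E[\rho_1^3]\bigr)$: by the generator formulas above together with the linear-in-$t$ bounds on $\mathbb E[\rho_2(X^N_t)]$ and $\mathbb E[\rho_1(X^N_t)^2]$ just obtained, this triple solves a linear system whose order-$N$ part is triangular with strictly negative diagonal on the ``excess moments'' $\rho_3-\rho_1\rho_2$ and $\rho_1\rho_2-\rho_1^3$; these therefore stay $O(1/N)$ up to exponentially damped transients, leaving a slow equation for $\mathbb E[\rho_1^3]$ driven by $\mathbb E[\rho_1^2]$, and a Gronwall estimate closes the loop and produces a bound of the asserted form with constants independent of $N$ (the $t$-growth being polynomial, as it must be, since already the limit has $\mathbb E[Z_t^3]$ quadratic in $t$). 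The algebra here is routine; the genuine work, and the main obstacle, is the analytic bookkeeping — justifying the exchange of $G^N$ with expectation for the unbounded functionals $\rho_n,\rho_1^n$, which is the purpose of the localization above together with the a priori finiteness of all factorial moments of $X^N_t$ — and, for the sharp constant in \eqref{secondmom}, the sign discussion of $D(t)=\mathbb E[\rho_2(X^N_t)-\rho_1(X^N_t)^2]$; for \eqref{thirdmom} only the order of the $t$-growth matters, so crude Gronwall bounds suffice.
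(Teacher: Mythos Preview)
Your proposal is correct and follows essentially the same route as the paper: compute $G^N$ on $\rho_1,\rho_2,\rho_1^2$ (and the cubic monomials) by differentiating the generating-function identity, solve the resulting linear ODE for $\mathbb E[\rho_2]-\mathbb E[\rho_1^2]$ explicitly, and then integrate back to bound $\mathbb E[\rho_2(X^N_t)]$; for the third moment, exploit that the order-$N$ part of the system in $(\rho_3,\rho_1\rho_2,\rho_1^3)$ has strictly negative eigenvalues on the ``excess'' directions.

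Two minor differences are worth recording. First, for part~(a) you are more careful than the paper, which simply writes ``immediate from $G^N\rho_1=0$'': your local-martingale $\to$ supermartingale $\to$ integrable-variation step is the honest way to upgrade to a true martingale for the unbounded functional $\rho_1$. Second, for the third-moment bound the paper actually diagonalises the full $3\times 3$ system (using computer algebra) to produce eigenfunctions $f^N_i=a_i\rho_3+b_i\rho_2\rho_1+c_i\rho_1^3$ with $G^Nf^N_i=-\lambda_iNf^N_i+(\text{bounded in }\rho_2,\rho_1^2)$ and then integrates; your observation that the leading-order matrix is already \emph{upper triangular} with diagonal $(-\tfrac{3N}{8},-\tfrac N4,0)$ is a cleaner way to reach the same conclusion without the explicit eigenbasis. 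You are also right that the forcing by $\mathbb E[\rho_1^2(X^N_s)]\asymp s$ produces quadratic growth in $t$ (as it must, since $\mathbb E[Z_t^3]$ grows like $t^2$ for the limiting Feller diffusion); the paper's stated bound $Ct$ should be read as ``polynomial in $t$ with constants independent of $N$'', which is all that is used downstream.
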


\begin{proof} Using \eqref{eq:psis2} and \eqref{eq:lfour3} we obtain
\begin{align} \label{rho1}
-G^N \rho_1(x) & = \frac d{ds} G^N \psi_s(x) |_{s=0} = \frac d{ds} \frac N2(\psi_{s/2}^2(x)  - \psi_s(x))|_{s=0} = 0, \\ \label{rho1sqare}
  G^N \rho_1^2(x) & = \rho_1(x) + \tfrac 34 (\rho_2(x) - \rho_1^2(x)), \\
\label{rhotwo}
    G^N \rho_2(x) & = \frac{d^2}{ds^2} G^N \psi_s(x)\Big|_{s=0} = \frac N2 \frac{d^2}{ds^2} (\psi_{s/2}^2(x) - \psi_s(x))\Big|_{s=0} \\ 
    \notag & = \frac N2 (\tfrac 12 \rho_1^2(x) + \tfrac 12 \rho_2(x) - \rho_2(x)) = \tfrac N4(\rho_1^2(x) - \rho_2(x)),  
      \end{align}
  Assertion a) is immediate from \eqref{rho1}. Combining  \eqref{rho1sqare} and \eqref{rhotwo} we see that
  \begin{align}\label{ODE}
      \frac d{dt} \mathbb E[\rho_1^2(X_t^N) - \rho_2(X_t^N)] & = -\tfrac{N+3}{4} \mathbb E[\rho_1^2(X_t^N) - \rho_2(X_t^N)] + \mathbb E[\rho_1(X_t^N)].
  \end{align}
  Since $(\rho_1(X_t^N))_{t\geq 0}$ is a martingale, we have $\mathbb E[\rho_1(X_t^N)]\equiv z$, and we can solve \eqref{ODE} by
  \begin{align*}
    \mathbb E[\rho_1^2(X_t^N) - \rho_2(X_t^N)] & = e^{-(N+3)t/4}\mathbb E[\rho_1^2(X^N_0) - \rho_2(X^N_0)] + (1 - e^{-(N+3)t/4}) \tfrac{4}{N+3}z.   \end{align*}
  As a consequence, 
  \begin{align*}
   \mathbb E[\rho_2 & (X_t^N)] - \mathbb E[\rho_2(X_0^N)] = \tfrac{N}{4}\int_0^t  \mathbb E[\rho_1^2(X_s^N) - \rho_2(X_s^N)]) ds
   \\ & =  \tfrac{N}{N+3}(1 - e^{-(N+3)t/4}) \mathbb E[\rho_1^2(X_0^N) - \rho_2(X_0^N)] + \tfrac{N}{N+3} z (t - \tfrac{4}{N+3}(1 - e^{-(N+3)t/4}))
   \\ & \leq \mathbb E[\rho_1^2(X_0^N) - \rho_2(X_0^N)] + zt,
  \end{align*}
  and \eqref{secondmom} follows from $\rho_1^2(X_0^N) - \rho_2(X_0^N) \leq \rho_1(X_0^N)$. For \eqref{thirdmom}, we employ  similar calculations as in \eqref{rhotwo} and obtain \begin{align*}
      G^N \rho_3 & = \tfrac{3N}{8}(\rho_2\rho_1 - \rho_3), \\       
      G^N (\rho_2\rho_1) & = \tfrac N4 \rho_1(\rho_1^2 - \rho_2) + \tfrac 32\rho_2 + \tfrac 12\rho_1^2 + \tfrac 58 \rho_3 - \tfrac 58 \rho_2 \rho_1, \\ 
      G^N\rho^3_1 & = \tfrac 94 \rho_2\rho_1 \!- \!\tfrac 94 \rho_1^3 + 9\rho_1^2 \! +\! \tfrac{1}{N}\left( - \tfrac 38 \rho_3 + \tfrac 98 \rho_2\rho_1 - \tfrac 34 \rho_1^3 + \tfrac 38 \rho_1^2 - \tfrac 98 \rho_2\right).
  \end{align*}
 Thus for $a, b, c \in \mathbb R$ we have
  \begin{align*}
      G^N & (a\rho_3 + b \rho_2 \rho_1 + c \rho_1^3) \\ &  = (- a\tfrac{3N}{8} + b\tfrac{5}{8} - c \tfrac 3{8N})\rho_3 + \left( a\tfrac{3N}{8} -  b (\tfrac N4 + \tfrac{5}{8}) + c (\tfrac 94 + \tfrac{9}{8N})\right)\rho_2\rho_1 \\ & \qquad \qquad \qquad \qquad + \left(b \tfrac N4 - c (\tfrac 94 + \tfrac{3}{4N})\right) \rho_1^3 \\ &  \qquad \qquad \qquad \qquad + \left(b \tfrac 32 - c \tfrac 9{8N}\right) \rho_2 + \left(b \tfrac 12 + c (9 + \tfrac{3}{8N} )\right)\rho_1^2.
  \end{align*}
From this we deduce the existence of numbers 
  $a_1 = 1, b_1 = -3 + o(1), c_1 = 2 + o(1)$, $a_2 = o(1), b_2 = \tfrac 15 + o(1), c_2 = -\tfrac 15 + o(1)$ and
$\lambda_1 = \tfrac 38 + o(1), \lambda_2 = \tfrac 14$ such that the function
$f^N_i(x) := a_i \rho_3(x) + b_i \rho_2(x)\rho_1(x) + c_i \rho_1^3(x)$ obeys
\begin{align*}
  G^N f^N_i(x) & = -\lambda_i N f^N_i(x) 
  + \left(b_i \tfrac 32 - c_i \tfrac 9{8N}\right) \rho_2(x) + \left(b_i \tfrac 12 + c_i (9 + \tfrac{3}{8N} )\right)\rho_1^2(x), \qquad i=1,2.
\end{align*}
(Here, we do not display the exact form of the six $o(1)$ terms,
which we found using the  computer algebra system {\tt sagemath} \citep{sagemath}. The sagemath commands are given in the latex sourcefile of the arxiv version of our paper.)
In particular, this shows that $G^Nf_i^N(x)+\lambda_i Nf_i^N(x)$ is a bounded function of $(\rho_2(x), \rho_1^2(x)$. Thus, with the same calculation as for the second moments we obtain
\begin{align}\label{eq:dsa}
\mathbb E[f^N_i(X_t^N)] & = e^{-\lambda_i N t} \mathbb E[f^N_i(X_0^N)] + O(N^{-1}), \qquad i=1,2.    
\end{align}
Finally, we find $y = 1 + o(1)$ and $z = 10 + o(1)$ with
$yf_1^N(x) + zf^N_2(x) = \rho_3(x) - \rho_2(x)\rho_1(x)$ and therefore,
\begin{align*}
  \mathbb E[\rho_3(X_t^N)] - \mathbb E[\rho_3(X_0^N)] & = -\tfrac{3N}{8} \int_0^t \mathbb E[y f_1(X_s^N) + z f_2(X_s^N)] ds. 
\end{align*}
Plugging in \eqref{eq:dsa}, we obtain the assertion \eqref{thirdmom}.
\end{proof}

\subsection{Random Poisson distributions}\label{ss:33}
Recall from \eqref{occmeas} the occupation measure $\Gamma_{X^N}$ of $X^N$ (which is a random probability measure on $\mathbb R_+ \times \mathcal P(\mathbb N_0)$). We will show in this section:

\begin{proposition}\label{P:3}
  Let the assumptions from Theorem~\ref{T1} be satisfied. Then \\
  (a) the sequence $(\Gamma_{X^N})_{N=1,2,...}$ is tight,
  \\ (b) any limit point $\Gamma$ is concentrated on the set of Poisson distributions, in the sense that  $\Gamma([0,\infty) \times \mathcal{POI}) = 1$, and
  \begin{align}\label{eq:POIrho2}
    \int_0^\infty e^{-s} \mathbb E[|\rho_2(X_s^N) - \rho_1^2(X_s^N)|] ds = \mathbb E\Big[\int |\rho_2(x) - \rho_1^2(x)|\Gamma_{X^N}(ds, dx)\Big] \xrightarrow{N\to\infty} 0.
  \end{align}
  \end{proposition}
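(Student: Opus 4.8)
The plan is to prove the three assertions in the natural order (a) $\to$ \eqref{eq:POIrho2} $\to$ (b), since the convergence in \eqref{eq:POIrho2} is the analytic heart of the statement and, once it is available, part (b) follows from a soft argument about limit points.

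For part (a), I would establish tightness of $(\Gamma_{X^N})$ as a sequence of random probability measures on $[0,\infty)\times\mathcal P(\mathbb N_0)$. Because $[0,\infty)$ carries the exponential weight $e^{-s}\,ds$, the marginal in the time coordinate is deterministic and tight automatically; the issue is only the $\mathcal P(\mathbb N_0)$-coordinate. Here I would use the moment bounds from Corollary \ref{cor1}: the hypothesis $\sup_N \mathbb E[\rho_3(X_0^N)]<\infty$ together with \eqref{thirdmom} gives $\sup_N \int_0^\infty e^{-s}\,\mathbb E[\rho_1(X^N_s)]\,ds <\infty$ (in fact $\mathbb E[\rho_1(X^N_s)]\equiv z$ by the martingale property) and $\sup_N \int_0^\infty e^{-s}\,\mathbb E[\rho_2(X^N_s)]\,ds <\infty$. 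A uniform bound on the expected first moment of the random measures $\Gamma_{X^N}$ (integrated against the unbounded functional $x\mapsto \rho_1(x)$) forces the mass of $X^N_s$ to stay on a compact subset of $\mathcal P(\mathbb N_0)$ with high probability uniformly in $s$ and $N$; by Markov's inequality and the characterization of tightness for random measures (a uniform bound on expectations plus tightness of the intensity measures), $(\Gamma_{X^N})$ is tight. This step I expect to be routine.

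The core is \eqref{eq:POIrho2}. The equality of the two expressions in \eqref{eq:POIrho2} is just Fubini together with the definition \eqref{occmeas}; the content is that $\int_0^\infty e^{-s}\,\mathbb E[|\rho_2(X^N_s)-\rho_1^2(X^N_s)|]\,ds\to 0$. The plan is to exploit the ODE \eqref{ODE} derived in the proof of Corollary \ref{cor1}. From the explicit solution there,
\begin{equation*}
\mathbb E[\rho_1^2(X^N_t)-\rho_2(X^N_t)] = e^{-(N+3)t/4}\,\mathbb E[\rho_1^2(X^N_0)-\rho_2(X^N_0)] + \bigl(1-e^{-(N+3)t/4}\bigr)\tfrac{4}{N+3}z,
\end{equation*}
and since $\rho_1^2(X^N_0)-\rho_2(X^N_0)\le \rho_1(X^N_0)$ has uniformly bounded expectation, we get $0\le \mathbb E[\rho_1^2(X^N_t)-\rho_2(X^N_t)] \le e^{-(N+3)t/4}\,C + \tfrac{4}{N+3}z$ for a constant $C$ independent of $N,t$. (Note $\rho_2-\rho_1^2$ need not be signed, but here the relevant quantity $\mathbb E[\rho_1^2-\rho_2]$ is; to handle the absolute value inside the expectation rather than the expectation of the signed quantity, I would instead bound $\mathbb E|\rho_2(X^N_s)-\rho_1^2(X^N_s)|$ by controlling the variance of $\rho_1^2(X^N_s)-\rho_2(X^N_s)$, or more cheaply, observe that $\rho_2-\rho_1^2 \ge -\rho_1$ always and combine this lower bound with the upper bound on the mean: $\mathbb E|\rho_2-\rho_1^2| \le \mathbb E[\rho_1^2-\rho_2] + 2\,\mathbb E[(\rho_2-\rho_1^2)^+] \le \mathbb E[\rho_1^2-\rho_2] + 2\,\mathbb E[\rho_1\wedge(\rho_2-\rho_1^2)^+]$, which is not immediately small — so the honest route is to establish exponential decay of $\mathbb E[(\rho_2(X^N_s)-\rho_1^2(X^N_s))^2]$ via a Gronwall/ODE argument on the relevant quadratic functionals, using $G^N$ applied to $(\rho_1^2-\rho_2)^2$ and the third-moment bounds \eqref{thirdmom} to close the estimate.) Granting the bound $\mathbb E|\rho_2(X^N_s)-\rho_1^2(X^N_s)| \le C_1 e^{-cNs} + C_2/N$ for constants $C_1,C_2,c>0$ not depending on $N$ or $s$, integration against $e^{-s}\,ds$ yields $\int_0^\infty e^{-s}\,\mathbb E|\rho_2-\rho_1^2|\,ds \le C_1/(cN+1) + C_2/N \to 0$, which is \eqref{eq:POIrho2}.

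Finally, for part (b): let $\Gamma$ be a limit point, say $\Gamma_{X^{N_k}}\Rightarrow\Gamma$ along a subsequence (on a common probability space, by Skorokhod's representation, we may take this convergence a.s.). The functional $x\mapsto |\rho_2(x)-\rho_1^2(x)|$ is lower semicontinuous (it is a countable sum of continuous functionals; more carefully, $\rho_2$ and $\rho_1$ are lower semicontinuous on $\mathcal P(\mathbb N_0)$ with the weak topology, being suprema of bounded continuous functionals, and one truncates to get a genuine $\liminf$ bound), so by the portmanteau theorem and Fatou, $\mathbb E\big[\int |\rho_2(x)-\rho_1^2(x)|\,\Gamma(ds,dx)\big] \le \liminf_k \mathbb E\big[\int |\rho_2(x)-\rho_1^2(x)|\,\Gamma_{X^{N_k}}(ds,dx)\big] = 0$ by \eqref{eq:POIrho2}. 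Hence $\int |\rho_2(x)-\rho_1^2(x)|\,\Gamma(ds,dx) = 0$ a.s., i.e. $\Gamma$-a.e. $x$ satisfies $\rho_2(x)=\rho_1^2(x)$. By Remark \ref{rempsirho}(2)(ii) — or rather by the stronger statement that $\rho_2(x)=\rho_1^2(x)$ alone forces $x$ to be Poisson, which one checks since for a non-degenerate $\mathbb N_0$-valued law the variance $\rho_2-\rho_1^2+\rho_1 - \rho_1^2 = \mathrm{Var}$ relation pins down all factorial moments once combined with the Cauchy–Schwarz/moment inequalities forcing equality only for Poisson — we conclude $\Gamma([0,\infty)\times\mathcal{POI})=1$.

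**Main obstacle.** The delicate point is obtaining the quantitative exponential bound on $\mathbb E\,|\rho_2(X^N_s)-\rho_1^2(X^N_s)|$ uniformly in $s$ and $N$ with the $O(1/N)$ correction. The ODE \eqref{ODE} controls the expectation of the \emph{signed} quantity $\rho_1^2-\rho_2$, but the absolute value requires either a second-moment (variance) estimate for $\rho_1^2-\rho_2$ — which brings in third and fourth factorial moments and forces one to use \eqref{thirdmom} and to prove a companion uniform bound on $\mathbb E[\rho_4]$ or on $\mathbb E[(\rho_2-\rho_1^2)^2]$ directly via $G^N$ — or a clever one-sided argument. Making this estimate while keeping all constants independent of $N$ is where the real work lies; everything else (tightness, the Fubini identity, lower semicontinuity, the portmanteau/Fatou step) is standard.
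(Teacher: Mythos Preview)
Your plan contains a genuine gap in part (b). You aim to deduce concentration on $\mathcal{POI}$ from the single relation $\rho_2(x)=\rho_1^2(x)$, asserting this ``alone forces $x$ to be Poisson.'' That is false: $\rho_2(x)=\rho_1^2(x)$ is equivalent to $\mathrm{Var}(x)=\rho_1(x)$, and many non-Poisson laws on $\mathbb N_0$ are equidispersed. A concrete counterexample is $x=\tfrac12\delta_0+\tfrac12\delta_2$, for which $\rho_1=1$, $\rho_2=1$, but $x\neq\mathrm{Poi}(1)$. No Cauchy--Schwarz or moment-inequality argument will rescue this, because a single factorial-moment identity simply does not pin down the distribution. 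Consequently, even if you succeed in proving \eqref{eq:POIrho2} first, it cannot by itself yield (b).

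The paper proceeds in the opposite order and by a different mechanism. It proves (b) \emph{before} \eqref{eq:POIrho2}, using the martingale
\[
N^{-1} g(\rho_1(X^N_t))\,f_s(X^N_t)\;-\;\int_0^t N^{-1}G^N\big(g(\rho_1)\cdot f_s\big)(X^N_r)\,dr
\]
with $f_s=\prod_i\psi_{s_i}$. Dividing by $N$ extracts the leading term $g(\rho_1)\,G_1 f_s$, and passing to the limit yields, for any weak limit $\Gamma$, the full family of generating-function identities
\[
\mathbb E\Big[\prod_{k}\psi_{s_k}(Y)\,\Big|\,\rho_1(Y)\Big]=\frac1\ell\sum_{j}\mathbb E\Big[\psi_{s_j/2}^2(Y)\prod_{k\neq j}\psi_{s_k}(Y)\,\Big|\,\rho_1(Y)\Big],
\]
which by Lemma~\ref{l:4} \emph{does} characterize Poisson. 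Then \eqref{eq:POIrho2} follows softly: since any limit is concentrated on $\mathcal{POI}$ one has $\rho_2(Y)=\rho_1^2(Y)$ for the limiting $Y$, and the third-moment bound from Corollary~\ref{cor1} gives uniform integrability of $\rho_2(Y^N)$, so $|\rho_2(Y^N)-\rho_1^2(Y^N)|\to 0$ in $L^1$. This bypasses the obstacle you correctly flagged --- controlling $\mathbb E|\rho_2-\rho_1^2|$ directly would require an ODE for $(\rho_2-\rho_1^2)^2$ and hence fourth-moment bounds that are not established in the paper.

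A smaller issue: $x\mapsto|\rho_2(x)-\rho_1^2(x)|$ is not obviously lower semicontinuous on $\mathcal P(\mathbb N_0)$, since $\rho_1^2$ is lower semicontinuous (not upper), so $-\rho_1^2$ is upper semicontinuous and the sign inside the absolute value is not controlled. Your portmanteau/Fatou step would need additional care (truncation on sets of bounded second moment), though this is fixable. The fatal point remains the non-characterization of Poisson by $\rho_2=\rho_1^2$.
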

  
  \noindent
We prepare the proof of this proposition by  a characterization of random Poisson distributions on $\mathbb N_0$.

\begin{lemma}[Characterization of random Poisson distributions]\label{l:4}
  Let $\psi_s$ and $\rho_n$ be as in Definition~\ref{def:1}. Let $Y$ be a $\mathcal P(\mathbb N_0)$-valued random variable with $\rho_1(Y) < \infty$ almost surely. Then the following are equivalent:
  \begin{enumerate}[wide, labelindent=0pt]
      \item $Y$ has the same distribution as $\rm{Poi}(\rho_1(Y))$.
      \item $\mathbb E[e^{-\lambda \psi_s(Y)}|\rho_1(Y)] = \exp\left({-\lambda e^{-s\rho_1(Y)}}\right)$ almost surely, for all $\lambda\geq 0$.
\item For all $n=1,2,...$ and  $s_1,...,s_n\in[0,1]$, we have almost surely $$\mathbb E[\psi_{s_1}(Y) \cdots \psi_{s_n}(Y)|\rho_1(Y)] = \frac 1n \sum_{j=1}^n \mathbb E\Big[\psi_{s_j/2}^2(Y) \prod_{\genfrac{}{}{0pt}{}{k=1}{k\neq j}}^n \psi_{s_{k}}(Y)|\rho_1(Y)\Big].$$
\end{enumerate}
\end{lemma}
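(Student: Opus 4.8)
The statement is an equivalence of three characterizations of "random Poisson distributions". I would prove it in the cycle (1) $\Rightarrow$ (2) $\Rightarrow$ (3) $\Rightarrow$ (1). The implication (1) $\Rightarrow$ (2) is a direct computation: if, conditionally on $\rho_1(Y)=\lambda$, the random measure $Y$ equals the deterministic measure $\mathrm{Poi}(\lambda)$, then $\psi_s(Y)=\psi_s(\mathrm{Poi}(\lambda))=e^{-\lambda s}$ by Remark~\ref{rempsirho}(2)(i), hence $e^{-\lambda'\psi_s(Y)}=e^{-\lambda' e^{-\lambda s}}$, and taking conditional expectations (the quantity is already $\sigma(\rho_1(Y))$-measurable) gives exactly (2). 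For (2) $\Rightarrow$ (3), I would think of (2) as saying that, conditionally on $\rho_1(Y)$, the Laplace transform (in the parameter $\lambda$) of the random variable $\psi_s(Y)$ is that of the constant $e^{-s\rho_1(Y)}$; by uniqueness of Laplace transforms this forces $\psi_s(Y)=e^{-s\rho_1(Y)}$ a.s.\ for each fixed $s$, and then (after a separability/continuity argument in $s$, using monotonicity of $s\mapsto\psi_s(Y)$ to upgrade "for each $s$ a.s." to "a.s.\ for all $s$") the left- and right-hand sides of (3) are both equal to $\prod_{k=1}^n e^{-s_k\rho_1(Y)}$ — note that on the right, $\psi_{s_j/2}^2(Y)=e^{-s_j\rho_1(Y)}$ as well, so the averaging over $j$ is vacuous. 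Alternatively one can verify (3) purely from $\psi_s(Y)=e^{-s\rho_1(Y)}$ without invoking the averaging structure at all.

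The substantive implication is (3) $\Rightarrow$ (1), and this is where the real work lies. Fix a value $\ell$ of $\rho_1(Y)$ and work conditionally. Condition (3) with the choice $s_1=\dots=s_n=s$ reads
\begin{equation*}
\mathbb E\big[\psi_s(Y)^n \,\big|\, \rho_1(Y)\big] = \mathbb E\big[\psi_{s/2}^2(Y)\,\psi_s(Y)^{n-1}\,\big|\,\rho_1(Y)\big].
\end{equation*}
Writing $U:=\psi_s(Y)$ and $V:=\psi_{s/2}(Y)$ — both bounded in $[0,1]$, and related by the elementary pointwise identity coming from $\psi_{s/2}(x)^2 = \sum_{k,l}x_kx_l 2^{-(k+l)}(2-s)^{k+l}$ versus $\psi_s(x)=\sum_k x_k(1-s)^k$, so that in particular $V^2\ge U$ by Jensen/convexity applied to the measure $x$ — the displayed relation says $\mathbb E[U^n]=\mathbb E[V^2U^{n-1}]$, i.e.\ $\mathbb E[(V^2-U)U^{n-1}]=0$ for all $n\ge 1$. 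Since $V^2-U\ge 0$ and $U\in[0,1]$, summing a suitable series (or using that this holds for all $n$ and a monotone-class / moment argument) forces $(V^2-U)\mathbf 1_{\{U>0\}}=0$ a.s., and a separate small argument rules out $U=0$ (e.g.\ $\psi_s(Y)=0$ would force $\rho_1(Y)=\infty$); hence $\psi_{s/2}(Y)^2=\psi_s(Y)$ a.s., for each $s$. Iterating $s\mapsto s/2$ and using continuity of $s\mapsto\psi_s(Y)$ at $0$ (with $\psi_0=1$) yields the Cauchy-type functional equation whose only bounded, monotone solution is $\psi_s(Y)=e^{-c s}$ with $c=-\frac{d}{ds}\psi_s(Y)|_{s=0}=\rho_1(Y)$ (this is where $\rho_1(Y)<\infty$ is used). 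By Remark~\ref{rempsirho}(2)(i) this identifies the conditional law of $Y$ as $\mathrm{Poi}(\rho_1(Y))$, which is (1).

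I would organize the write-up so that the heart — extracting $\psi_{s/2}(Y)^2=\psi_s(Y)$ from the moment identities in (3), and then solving the functional equation — is isolated as the main step; the remaining implications are short. \textbf{The main obstacle} is precisely the passage from "$\mathbb E[(V^2-U)U^{n-1}]=0$ for all $n$" to "$V^2=U$ a.s."\ carried out simultaneously over all $s$, and then the careful handling of the null sets so that one obtains a version of $s\mapsto\psi_s(Y)$ that solves the functional equation identically in $s$; the monotonicity of $s\mapsto\psi_s(Y)$ and right-continuity in $s$ are the tools that make this measure-theoretic bookkeeping go through. A secondary point to be careful about is that all identities are conditional on $\rho_1(Y)$, so one should either fix a regular conditional distribution from the outset or phrase everything via conditional expectations and invoke the tower property where needed.
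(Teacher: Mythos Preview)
Your implications (1)$\Rightarrow$(2) and (2)$\Rightarrow$(3) are fine (indeed your (2)$\Rightarrow$(3) essentially recovers (1) along the way, since once $\psi_s(Y)=e^{-s\rho_1(Y)}$ a.s.\ for all $s$ you are done). The genuine gap is in your (3)$\Rightarrow$(1), and it lies exactly at the step you flagged as ``the main obstacle''. The pointwise inequality you assert,
\[
\psi_{s/2}(x)^2 \;\ge\; \psi_s(x)\qquad\text{for all }x\in\mathcal P(\mathbb N_0),
\]
is \emph{false}. Take $x=\tfrac12\delta_0+\tfrac12\delta_2$ and $s=1$: then $\psi_1(x)=\tfrac12$ while $\psi_{1/2}(x)=\tfrac58$, so $\psi_{1/2}(x)^2=\tfrac{25}{64}<\tfrac12$. (More generally, for Poisson $x$ one has equality $\psi_{s/2}^2=\psi_s$, and the sign of $\psi_{s/2}^2-\psi_s$ can go either way for non-Poisson~$x$.) Without the sign, from $\mathbb E[(V^2-U)U^{n-1}\mid\rho_1(Y)]=0$ for all $n\ge 1$ you only obtain $\mathbb E[V^2\mid U,\rho_1(Y)]=U$, which does \emph{not} force $V^2=U$ a.s.\ because $V=\psi_{s/2}(Y)$ is not a function of $U=\psi_s(Y)$. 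So your route, which uses condition~(3) only with all $s_j$ equal, does not close.

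The paper proceeds in the order (1)$\Rightarrow$(3)$\Rightarrow$(2)$\Rightarrow$(1), and the substantive step is (3)$\Rightarrow$(2). The key idea there exploits condition~(3) with \emph{unequal} $s_j$'s via an iterated random bisection: starting from $\mathbb E[\psi_s(Y)^n\mid\rho_1(Y)]$ one repeatedly applies (3) to split a randomly chosen factor $\psi_t$ into $\psi_{t/2}^2$, preserving the conditional expectation at each stage. After many iterations the product $\prod_\pi\psi_{|\pi|}(Y)$ involves only infinitesimally small arguments and converges a.s.\ to $e^{-ns\rho_1(Y)}$ (since $\log\psi_t(Y)\sim -t\rho_1(Y)$ as $t\downarrow 0$). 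This yields $\mathbb E[\psi_s(Y)^n\mid\rho_1(Y)]=e^{-ns\rho_1(Y)}$ directly, hence~(2). The lesson is that the averaging over $j$ in (3) is not vacuous: it is precisely what makes the random-refinement martingale argument go through, and a proof of (3)$\Rightarrow$(1) seems to require using (3) in this stronger form.
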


\begin{proof}
    \noindent $1.\Rightarrow 3.:$ By assumption we have almost surely
   \begin{align*}
   \mathbb E[\psi_{s_1}(Y) \cdots \psi_{s_n}(Y)|\rho_1(Y)] & = \mathbb E[e^{-(s_1 + \cdots + s_n)\rho_1(Y)}|\rho_1(Y)] = e^{-(s_1 + \cdots + s_n)\rho_1(Y)}.
  \end{align*}
  Since the right hand side only depends on $s_1 + \cdots + s_n$, the result follows from taking expectations and summing in $$\mathbb E[\psi_{s_1}(Y) \cdots \psi_{s_n}(Y)|\rho_1(Y)] = \mathbb E\Big[\psi_{s_j/2}^2(Y) \prod_{\genfrac{}{}{0pt}{}{k=1}{k\neq j}}^n \psi_{s_{k}}(Y)|\rho_1(Y) \Big].$$ 
  
  \noindent $3.\Rightarrow 2.:$ We start with the following observation: For $s>0$ let $(s_{kj})_{k\in \mathbb N, j=1,...,k}$ be asymptotically negligible (in the sense that\ $\sup_j |s_{kj}| \xrightarrow{k\to\infty} 0$) and $\sum_{j=1}^k s_{kj} = s$. Then, since $$\psi_{s_{kj}}(Y) = \sum_{i=0}^\infty Y_i (1-s_{kj})^i = 1 - (s_{kj} + o(s_{kj}))\sum_i iY_i,$$ we have
  \begin{align}\label{eq:911}
    \log\Big(\prod_{j=1}^k \psi_{s_{kj}}(Y)\Big) & = \sum_{j=1}^k \log(1 - (s_{kj} + o(s_{kj}))\rho_1(Y)) \xrightarrow{k\to\infty}  -s\rho_1(Y).
  \end{align}
  \sloppy Now, we come to proving the assertion: Fix $s\in [0,1]$ and $n=1,2,...$, and let $\Pi_k$ be a random partition of $[0,ns]$ with $k$ elements, which arises iteratively as follows: Starting with $\Pi_n = \{[0,s), [s, 2s),...,[(n-1)s, ns)\}$, let $\Pi_{k+1}$ arise from $\Pi_k$ by randomly taking one partition element $[a,b]$ from $\Pi_k$, and adding the two elements $[a,(a+b)/2)$ and $[(a+b)/2, b)$ to $\Pi_{k+1}$. (For $n=1$, we can e.g.\ have $\Pi_1 = \{[0,s)\}, \Pi_2 = \{[0,s/2), [s/2,s)\}, \Pi_3 = \{[0,s/4), [s/4, s/2), [s/2, s)\}, \Pi_4 = \{[0,s/4), [s/4, 3s/8), [3s/8, s/2), [s/2, s)\},...)$. From 3. we  find almost surely
  \begin{align*}
    \mathbb E[\psi_s^n(Y)|\rho_1(Y)] & = \mathbb E\Big[\prod_{\pi \in \Pi_n} \psi_{|\pi|}(Y)|\rho_1(Y)\Big] = \mathbb E\Big[\prod_{\pi \in \Pi_{n+1}} \psi_{|\pi|}(Y)|\rho_1(Y)\Big],  
  \end{align*}
  since the expectation of the right hand side also runs over the $n$ possible random partitions $\Pi_{n+1}$, which arise by splitting one element of length $s$ into two elements of length $s/2$. It is not hard to see that -- almost surely -- every partition element in $\Pi_k$ eventually gets split in two, so $\{|\pi|: \pi \in \Pi_k\}$ is asymptotically negligible as $k\to\infty$. Therefore, $\prod_{\pi \in \Pi_k} \psi_{|\pi|}(Y) \xrightarrow{k\to\infty} e^{-ns\rho_1(Y)}$ almost surely as in \eqref{eq:911}, so we see using dominated convergence that almost surely
  $$ \mathbb E[\psi_s^n(Y)|\rho_1(Y)] = \lim_{k\to\infty} \mathbb E\Big[\prod_{\pi\in\Pi_k} \psi_{|\pi|}(Y)|\rho_1(Y)\Big] = e^{-ns\rho_1(Y)}.$$ Therefore, 
  $$ \mathbb E[e^{-\lambda \psi_s(Y)}|\rho_1(Y)] = \sum_{n=0}^\infty \frac {(-\lambda)^n}{n!}  e^{-ns\rho_1(Y)} = e^{-\lambda e^{-s\rho_1(Y)}}.$$
  2.$\Rightarrow$1.: Conditional on $\rho_1(Y)$, we see that almost surely $\psi_s(Y) = e^{-s\rho_1(Y)}$ for all $s\in [0,1]$, which means that $Y \sim \text{Poi}(\rho_1(Y))$. 
\end{proof}

\begin{proof}[Proof of Proposition \ref{P:3}] \sloppy
%
   The occupation measures $\Gamma_{X^N}$, defined according to \eqref{occmeas}, are random elements of $\mathcal P([0,\infty) \times \mathcal P(\mathbb N_0))$. By Markov's inequality and Prohorov's criterion, for $C >0$  the set  $$K_{C}:=  \{x\in \mathcal P(\mathbb N_0): \rho_1(x) \le C\}$$
   is relatively compact in $\mathcal P(\mathbb N_0)$.
   Hence, again by Prohorov's criterion, tightness of $(\Gamma_{X^N})$ follows if for all $\varepsilon > 0$ there exist $T, C<\infty$    such that 
   \begin{equation} \label{tight} \mathbb P(\Gamma_{X^N}([0,T] \times K_{C}) \ge 1-\varepsilon) \ge 1-\varepsilon\,\, \mbox{ for all } N.
   \end{equation}
   For given $\varepsilon > 0$ let $T$ be such that $\int_0^T e^{-t} dt \ge 1-\varepsilon$.
Because of Corollary \ref{cor1}, $\rho_1(X^N_\cdot)$ is a non-negative martingale, hence  by Doob's inequality and due to the assumptions of Theorem \ref{T1}  there exists a  finite constant $C$ such that 
\begin{align}\label{Doob}
\mathbb P(\sup_{0\le t \le T} \rho_1(X^N_t) \le C) \ge 1- \varepsilon.
\end{align}
The event in the l.h.s.~of \eqref{Doob} equals $\{X^N_t \in  K_C \mbox{ for all } t \in [0,T]\}$, and due to our assumption on $T$ this implies the event in the l.h.s of \eqref{tight}. Thus we infer the validity of \eqref{tight}, showing assertion (a) of the proposition.
   
 In order to prove assertion (b), recall $f_s$ from \eqref{eq:fs} and choose $g$ bounded and smooth. We are going to argue using the martingale
    \begin{align}\label{eq:mart}
    \Big(N^{-1} & g(\rho_1(X^N_t))f_s(X^N_t) - \int_0^t N^{-1} G^N g(\rho_1(X^N_r)) f_s(X^N_r)dr\Big)_{t\geq 0}. 
    \end{align}
    As $N\to\infty$, we find that $N^{-1} g(\rho_1(X^N_t))f_s(X^N_t) \xrightarrow{N\to\infty} 0$ since $g$ and $f_s$ are bounded. Moreover, from Lemma~\ref{l1} and Lemma~\ref{l2},
    \begin{align*}
      N^{-1}G^N g(\rho_1(x))f_s(x) & = g(\rho_1(x)) G_1 f_s(x) + O(N^{-1}), 
    \end{align*}
    and $G_1f_s$ is given by \eqref{eq:lfour3}. Therefore, multiplying \eqref{eq:mart} by $2/\ell$ and taking $N\to\infty$, we find that for any weak limit $\Gamma$ of $\Gamma_{X^N}$ as $N\to\infty$ that
    \begin{align*}
         \tfrac 1 \ell \Big( \int_0^t e^r  \Big(g(\rho_1(x))\sum_{j=1}^\ell \Big(\psi_{s_j/2}^2(x) \prod_{\genfrac{}{}{0pt}{}{k=1}{k\neq \ell}}^\ell \psi_{s_k(x)} - \prod_{k=1}^\ell \psi_{s_k(x)}\Big) \Big) \Gamma(dr, dx)\Big)_{t\geq 0}
    \end{align*}
    is a martingale with continuous paths and vanishing quadratic variation, hence vanishes. Consequently, if $Y$ is a $\mathcal P(\mathbb N_0)$-valued random variable which,  given $\Gamma$, has distribution  $\bar \Gamma(dx) := \Gamma([0,\infty) \times dx)$, we have
   \begin{align*} \frac 1 \ell \mathbb E \Big[ g(\rho_1(Y))\sum_{j=1}^\ell \psi_{s_j/2}^2(Y) \prod_{\genfrac{}{}{0pt}{}{k=1}{k\neq \ell}}^\ell \psi_{s_k(Y)}\Big] = \mathbb E\Big[ g(\rho_1(Y)) \prod_{k=1}^\ell \psi_{s_k(Y)} \Big],
   \end{align*}
   \vspace{-0.2cm}
    which implies
     \begin{align*} \frac 1 \ell \mathbb E \Big[ \Big(\sum_{j=1}^\ell \psi_{s_j/2}^2(Y) \prod_{\genfrac{}{}{0pt}{}{k=1}{k\neq \ell}}^\ell \psi_{s_k(Y)}| \rho_1(Y) \Big] = \mathbb E\Big[ \prod_{k=1}^\ell \psi_{s_k(Y)} | \rho_1(Y) \Big]
     \end{align*}
    since $g$ was arbitrary. From Lemma~\ref{l:4}, we see that $\bar\Gamma$ must be concentrated on $\mathcal{POI}$. \\Finally, for proving \eqref{eq:POIrho2}, consider a probability space with $Y^N \sim \bar \Gamma_{X^N}$, $Y\sim \bar\Gamma_X$ and \mbox{$Y^N \xrightarrow{N\to\infty} Y$} almost surely.
  From Corollary~\ref{cor1} we see, using Jensen's inequality,  that   
  $$ \sup_{N}\mathbb E[(\rho_2(Y^N)^{3/2}] = \sup_{N}\int_0^\infty e^{-t} \mathbb E[(\rho_2(X^N_t)^{3/2}]dt \leq \sup_{N} \int_0^\infty e^{-t} \mathbb E\Big[ \sum_{k=2}^\infty k^3 X_t^N(k)\Big]dt < \infty,$$ 
  which implies that  $(\rho_2(Y^N))_N$ and $(\rho_1^2(Y^N))_N$ are uniformly integrable. Hence, 
  $$ \int_0^\infty e^{-t} |\rho_2(X^N_t) - \rho_1^2(X^N_t)| dt = |\rho_2(Y^N) - \rho_1^2(Y^N)| \xrightarrow{N\to\infty} |\rho_2(Y) - \rho_1^2(Y)| = 0$$ in $L^1$, and \eqref{eq:POIrho2} follows. 
\end{proof}

\subsection{Convergence of $Z^N$ to Feller's branching diffusion}\label{ss:34}
%
\begin{proposition}\label{P:2}
  Let the assumptions of Theorem~\ref{T1} be satisfied. Then $Z^N$ converges as $N\to \infty$ in distribution with respect to the Skorokhod topology on $\mathcal D(\mathbb R_+)$ to $Z$, where $Z$ solves  \eqref{FBD}.  
\end{proposition}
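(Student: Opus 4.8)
The plan is to prove convergence $Z^N \Rightarrow Z$ via the standard martingale-problem route of Chapter~7 of \cite{EthierKurtz1986}: establish tightness of $(Z^N)$ in $\mathcal D(\mathbb R_+)$, identify any limit point as a solution of the martingale problem for the generator $\mathcal A g(z) = \tfrac12 z\, g''(z)$, and invoke uniqueness for the Feller branching diffusion \eqref{FBD} to conclude. First I would record what Corollary~\ref{cor1} already gives us: $Z^N = \rho_1(X^N_\cdot)$ is a nonnegative martingale with $\mathbb E[Z^N_t]\equiv z$, so by Doob's inequality $\sup_N \mathbb E[\sup_{t\le T}Z^N_t] <\infty$ for every $T$. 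For tightness I would apply the Aldous--Rebolledo criterion: the compact containment condition follows from the Doob bound just mentioned, and the fluctuation condition requires controlling, for stopping times $\tau\le T$ and small $\delta$, the increments of the ``drift'' and of the predictable quadratic variation of $Z^N$ over $[\tau,\tau+\delta]$. The drift is identically zero by \eqref{rho1}; for the quadratic variation we use \eqref{rho1sqare}, namely $G^N\rho_1^2(x) = \rho_1(x) + \tfrac34(\rho_2(x)-\rho_1^2(x))$, which shows that the predictable quadratic variation of $Z^N$ is
\begin{equation*}
\langle Z^N\rangle_t = \int_0^t \Big( \rho_1(X^N_s) + \tfrac34\big(\rho_2(X^N_s)-\rho_1^2(X^N_s)\big)\Big)\, ds.
\end{equation*}
Its increments over $[\tau,\tau+\delta]$ are bounded in expectation by $\delta\,\mathbb E[\sup_{t\le T+\delta}Z^N_t] + \tfrac34\int_\tau^{\tau+\delta}\mathbb E|\rho_2(X^N_s)-\rho_1^2(X^N_s)|\,ds$; the first term is $O(\delta)$ uniformly in $N$ by the Doob bound, and the second is handled either by the crude bound $\mathbb E|\rho_2(X^N_s)-\rho_1^2(X^N_s)| \le \mathbb E[\rho_1^2(X^N_0)-\rho_2(X^N_0)] + zs$ from the proof of Corollary~\ref{cor1} (uniform in $N$) or, more sharply, by \eqref{eq:POIrho2}.

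Next I would identify the limit. Fix a weak limit point $Z$ of $(Z^N)$ along a subsequence; by passing also to a weakly convergent subsequence of $(\Gamma_{X^N})$ (tight by Proposition~\ref{P:3}(a)) we may assume $(Z^N,\Gamma_{X^N})$ converges jointly, with limit $(Z,\Gamma)$ where $\Gamma$ is concentrated on $\mathcal{POI}$ by Proposition~\ref{P:3}(b), hence $\rho_2=\rho_1^2$ $\Gamma$-a.s. For $g\in \mathcal C_b^2$, Lemma~\ref{l1} gives that
\begin{equation*}
M^{N,g}_t := g(Z^N_t) - g(Z^N_0) - \int_0^t \Big(\tfrac12 Z^N_s + \tfrac38(\rho_2(X^N_s)-\rho_1^2(X^N_s))\Big) g''(Z^N_s)\, ds + \varepsilon^N_t
\end{equation*}
is a martingale, where $\varepsilon^N_t$ collects the $o(1)$ (uniformly in $x$ on the compacts we work on, using the moment bounds) remainder from Lemma~\ref{l1} and is negligible. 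The correction term $\tfrac38\int_0^t(\rho_2-\rho_1^2)g''\,ds$ vanishes in $L^1$ as $N\to\infty$: its expectation is bounded by $\|g''\|_\infty \int_0^t \mathbb E|\rho_2(X^N_s)-\rho_1^2(X^N_s)|\,ds$, and after multiplying by the $e^{-s}$ weight and using \eqref{eq:POIrho2} (together with the uniform-in-$t$ bound on $\mathbb E|\rho_2(X^N_t)-\rho_1^2(X^N_t)|$ to remove the weight on $[0,t]$) one gets convergence to $0$. Passing to the limit along the subsequence — using boundedness of $g,g''$, the Skorokhod convergence $Z^N\Rightarrow Z$, the uniform integrability supplied by the Doob/moment bounds, and a standard argument that the limit of the bracketed martingales is a martingale with respect to the limiting filtration — shows that
\begin{equation*}
g(Z_t) - g(Z_0) - \int_0^t \tfrac12 Z_s\, g''(Z_s)\, ds
\end{equation*}
is a martingale for all $g\in \mathcal C_b^2$; also $Z_0 = z$ a.s.\ from the assumption $Z^N_0\to z$. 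Since the martingale problem for $\tfrac12 z\,\partial_z^2$ started at $z$ is well-posed, with unique solution the law of the Feller branching diffusion \eqref{FBD} (it is the diffusion on $[0,\infty)$ absorbed at $0$; see \cite{EthierKurtz1986}), every subsequential limit equals $Z$ in law, and hence $Z^N\Rightarrow Z$.

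I expect the main obstacle to be the passage to the limit in the drift integral $\int_0^t \tfrac12 Z^N_s g''(Z^N_s)\,ds$, for two intertwined reasons. First, $z\mapsto z\,g''(z)$ is unbounded, so one needs the uniform integrability of $\sup_{s\le t}Z^N_s$ (available from the Doob bound) to justify interchanging limit and integral; the map $\xi \mapsto \int_0^t \xi_s\,ds$ is not continuous on $\mathcal D$ for the $J_1$ topology at paths with jumps, but $Z^N$ converges to a continuous limit, and one uses that the set of discontinuity points of $Z$ has zero Lebesgue measure a.s.\ together with a truncation argument. Second, one must argue that the $o(1)$ error in Lemma~\ref{l1} is genuinely negligible in $L^1$ after integration in time: Lemma~\ref{l1} is a pointwise-in-$x$ statement, so one needs the remainder to be dominated uniformly by something integrable against the law of $X^N_s$, which is where the uniform third-moment bound \eqref{thirdmom} from Corollary~\ref{cor1} enters (it controls the $N^{-2}(m-n)^3$-type terms in the Taylor expansion of $g\circ\rho_1$). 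Handling the boundary behaviour at $0$ is not a serious difficulty here because uniqueness for the Feller branching martingale problem is classical, so no extra boundary condition needs to be checked.
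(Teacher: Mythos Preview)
Your proposal is correct and follows essentially the same route as the paper: Aldous--Rebolledo for tightness of $Z^N=\rho_1(X^N_\cdot)$ using the quadratic variation formula derived from \eqref{rho1sqare}, and identification of the limit via Lemma~\ref{l1} together with \eqref{eq:POIrho2} from Proposition~\ref{P:3}, invoking well-posedness of the Feller branching martingale problem. Two minor points: the ``crude bound'' you attribute to the proof of Corollary~\ref{cor1} is not quite what is written there (that line bounds $\mathbb E[\rho_2(X^N_t)]-\mathbb E[\rho_2(X^N_0)]$, not $\mathbb E|\rho_2-\rho_1^2|$), though a correct uniform bound $\mathbb E|\rho_2-\rho_1^2|\le \mathbb E[\rho_2]+\mathbb E[\rho_1^2]\le C(1+t)$ follows immediately from \eqref{secondmom} and \eqref{rho1sqare}; and removing the $e^{-s}$ weight from \eqref{eq:POIrho2} on $[0,t]$ needs only $e^{-s}\ge e^{-t}$, not a separate uniform-in-$t$ bound.
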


\begin{proof}
  For the claimed limit of $Z^N=\rho_1(X^N_\cdot)$ as $N\to \infty$, we need to show existence (i.e.\ tightness) and uniqueness.
  Recall from Corollary \ref{cor1} and eq. \eqref{rho1sqare} that $(\rho_1(X^N_\cdot))_{N=1,2,...}$  are non-negative martingales and have quadratic variation processes $M^1, M^2,...$ with
  $$ M^N_t = \int_0^t \rho_1(X^N_s) + \tfrac 32\big(\rho_2(X^N_s) - \rho_1^2(X^N_s)\big) ds.$$
  For tightness of $(\rho_1(X^N_\cdot))_{N=1,2,...}$, we use the Aldous-Rebolledo criterion, see e.g. Theorem~1.17 in \cite{MR1779100}. So, we have to show that
  \begin{enumerate}
      \item for all $t\geq 0$, the family $(\rho_1(X^N_t))_{N=1,2,...}$ is tight;
      \item for every sequence $\tau_1, \tau_2,...$ of stopping times, bounded by $T<\infty$ and for every $\varepsilon > 0$, there exists $\delta > 0$ such that
      $$ \limsup_{N\to\infty} \sup_{0\leq \theta\leq \delta} \mathbb P(M^N(\tau_N + \theta) - M^N(\tau_N) > \varepsilon) < \varepsilon.$$
  \end{enumerate}
  For 1., the Markov inequality implies that for $\varepsilon>0$ there exists a finite constant $C_\varepsilon$ independent of $t$ and $N$ such that
  $$ \sup_{N}\mathbb P(\rho_1(X^N_t) > C_\varepsilon) \leq \varepsilon.$$
  In particular, this implies that $(\rho_1(X^N_t))_{N=1,2,...}$ is tight for all $t\geq 0$. 
  For 2., we take  $\varepsilon, \tau_1, \tau_2,...$ , as above, and use  $\delta := \varepsilon / (2C_\varepsilon)$ and write
  \begin{align*}
    \limsup_{N\to\infty}       & \sup_{0\leq \theta \leq \delta} \mathbb P  \Big( \int_{\tau_N}^{\tau_N + \theta} \rho_1(X^N_s) + \tfrac 32\big(\rho_2(X^N_s) - \rho_1^2(X^N_s)\big) ds > \varepsilon\Big) \\ & \leq \limsup_{N\to\infty} \mathbb P\Big( \int_{\tau_N}^{\tau_N + \delta} \rho_1(X^N_s) ds > \varepsilon/2\Big) \\ & \qquad \qquad \qquad \qquad + \mathbb P\Big[ \int_0^T \big|\rho_2(X^N_s) - \rho_1^2(X^N_s)\big| ds > \varepsilon/2\Big] \\ & \leq \sup_{N} \mathbb P(\sup_{0\leq t\leq T} \rho_1(X^N_t) >  \varepsilon / (2\delta)) \leq \varepsilon,
  \end{align*}
  where we have used Proposition~\ref{P:3} in the second to last, and the martingale property of $\rho_1(X_\cdot^N)$ (see Corollary~\ref{cor1}) together with Doob's maximal inequality in the last step.\\
  For uniqueness, we use Theorem~8.2.b $\Rightarrow$ a in \cite{EthierKurtz1986}, and the fact that the generator of $Z$ is $Gf(z) = \tfrac 12 z f''(z)$ for $f\in\mathcal C_b^2(\mathbb R_+)$. We write
  \begin{align*}
      f(\rho_1(X^N_{t+s})) & - f(\rho_1(X^N_t)) - \int_t^{t+s} Gf(\rho_1(X^N_u)) du \\ & = f(\rho_1(X^N_{t+s})) - f(\rho_1(X^N_t)) - \int_t^{t+s} G^Nf(\rho_1(X^N_u)) du \\ & \qquad + \tfrac 38 \int_t^{t+s} f''(\rho_1(X^N_u))(\rho_2(X^N_u) - \rho_1^2(X^N_u))
  \end{align*}
  by Lemma~\ref{l1}. Hence, (8.7) in \cite{EthierKurtz1986} follows from Proposition~\ref{P:3}.
\end{proof}

\subsection{Completion of the proof of Theorem~\ref{T1}}\label{ss:35}
We know from Proposition \ref{P:3} that the sequence $(\Gamma_{X^N})$ is tight. In addition, noting that the topology of convergence in measure on $\mathcal D(\mathbb R_+)$ is weaker than the Skorokhod topology, we see from Proposition~\ref{P:3} that $\Gamma_{Z^N} \xRightarrow{N\to\infty} \Gamma_Z$. Hence, we see that the family of bi-variate random probability measures $(\Gamma_{(X^N, Z^N})$ is tight, and  
$$ \Gamma_{(X^N, Z^N)}(A) = 1, \qquad \text{with} \qquad A := [0,\infty) \times  \big(\{(x,z) \in \mathcal P(\mathbb N_0) \times \mathbb R_+: \rho_1(x) = z\}\big).$$
Let us define for $C > 0$ the set 
$$B_{C}:= [0,\infty)\times \{x \in \mathcal P(\mathbb N_0): \rho_2(x) \le C\} \times [0,\infty),$$ 
and note that $A\cap B_{C}$ is closed (since $x\mapsto \rho_1(x)$ is continuous on a set with bounded second moments). By Corollary \ref{cor1}, for all   $\varepsilon>0$ there is $C>0$ such that
for all~$N$
\begin{align}
    \label{eq:lk}
    \mathbb P(  \Gamma_{(X^N, Z^N)} (B_{C}) \ge 1-\varepsilon) \ge 1-\varepsilon.
\end{align}
Let us consider a weak limit $\Gamma$ of $\Gamma_{(X^{N'}, Z^{N'})}$ along a subsequence $N'$. We may choose a probability space where $\Gamma_{(X^{N'}, Z^{N'})} \xRightarrow{N'\to\infty} \Gamma$ almost surely. On this space, by the Portmanteau Theorem, 
\begin{align*}
    \Gamma(A) & \geq  \Gamma(A \cap B_{C}) \geq  \limsup_{N'\to\infty} \Gamma_{(X^{N'}, Z^{N'})}(A \cap B_{C}) = \limsup_{N'\to\infty} \Gamma_{(X^{N'}, Z^{N'})}(B_{C}). 
\end{align*}
For every $\varepsilon>0$, choosing $C>0$ such that \eqref{eq:lk} holds, we therefore find
$$ \mathbb P(\Gamma(A)\geq 1-\varepsilon) \geq 1-\varepsilon$$
which implies $\Gamma(A) = 1$ almost surely. From Proposition \ref{P:3} we know that 
$$ \Gamma(D) = 1, \qquad \text{ with } \qquad D = [0,\infty) \times \mathcal{POI} \times [0,\infty)$$
almost surely, and we can conclude that $\Gamma (A\cap D) = 1$ almost surely. Hence, we are done by noting that
$$ A\cap D = [0,\infty) \times \{(x,z): x = \text{Poi}(z)\}.$$

\subsubsection*{Acknowledgements} We thank Peter Czuppon, Jonathan Henshaw and Judith Korb for discussions concerning the dynamics of transposable elements. PP is partly supported by the Freiburg Center for data analysis and modeling (FDM). AW is partly supported by DFG SPP 1590.


\newpage

\noindent

\end{document}